\newcommand{\zed}{\mathbb{Z}}
\newcommand{\Q}{\mathbb{Q}}
\newcommand{\im}{\mathrm{Im}}
\newcommand{\bn}[2]{\genfrac{(}{)}{0pt}{}{#1}{#2}}
\newcommand{\id}{\mathrm{id}}
\newcommand{\pd}{\mathrm{pd}}
\newcommand{\m}{\mathfrak{m}}
\newcommand{\Tor}{\mathrm{Tor}}
\newcommand{\slmf}{\mathfrak{sl}}
\theoremstyle{plain}
\newtheorem{theorem}{Theorem}[section]
\newtheorem{lemma}[theorem]{Lemma}
\newtheorem{proposition}[theorem]{Proposition}
\newtheorem{conjecture}[theorem]{Conjecture}
\theoremstyle{definition}
\newtheorem{definition}[theorem]{Definition}
\newtheorem{example}[theorem]{Example}
\theoremstyle{remark}
\newtheorem{remark}[theorem]{Remark}
\numberwithin{equation}{section}
\begin{document}

\title{Betti Numbers of the HOMFLYPT Homology}

\author{Hao Wu}

\thanks{The author was partially supported by NSF grant DMS-1205879.}

\address{Department of Mathematics, The George Washington University, Phillips Hall, Room 739, 801 22nd Street NW, Washington DC 20052, USA. Telephone: 1-202-994-0653, Fax: 1-202-994-6760}

\email{haowu@gwu.edu}

\subjclass[2010]{Primary 57M27}

\keywords{HOMFLYPT homology, Betti number, split link} 

\begin{abstract}
In \cite{Ras-2-bridge}, Rasmussen observed that the Khovanov-Rozansky homology of a link is a finitely generated module over the polynomial ring generated by the components of this link. In the current paper, we study the module structure of the middle HOMFLYPT homology, especially the Betti numbers of this module. For each link, these Betti numbers are supported on a finite subset of $\mathbb{Z}^4$. One can easily recover from these Betti numbers the Poincar\'e polynomial of the middle HOMFLYPT homology. We explain why the Betti numbers can be viewed as a generalization of the reduced HOMFLYPT homology of knots. As an application, we prove that the projective dimension of the middle HOMFLYPT homology is additive under split union of links and provides a new obstruction to split links.
\end{abstract}

\maketitle

\section{Introduction}\label{sec-intro}

In \cite{Ras-2-bridge}, Rasmussen observed that the $\slmf(N)$ homology of a link defined in \cite{KR1} is a finitely generated module over the polynomial ring generated by the components of this link. His observation applies to other versions of the Khovanov-Rozansky homology too. And it is not hard to see that the module structure of the Khovanov-Rozansky homology over this polynomial ring is a link invariant. In the current paper, we study the module structures of the middle HOMFLYPT homology $H$ defined in \cite[Definition 2.9]{Ras2} and its reduction $H_r$ with respect to one component.\footnote{The middle HOMFLYPT homology $H$ in the current paper is defined exactly as in \cite{Ras2}. However, its reduction $H_r$ defined in Section \ref{sec-HOMFLYPT-mod} below is different from the reduced HOMFLYPT homology $\overline{H}$ in \cite{Ras2}. For non-split links, both $H_r$ and $\overline{H}$ are the same quotient of $H$. But, for links with split diagrams, $H_r$ remains a quotient of $H$, while $\overline{H}$ follows a more complex definition in \cite[Section 2.10]{Ras2} and is no longer a quotient of $H$.} Please see Section \ref{sec-HOMFLYPT-mod} below for a brief review of $H$ and $H_r$, especially Lemmas \ref{lemma-HOMFLYPT-module} and \ref{lemma-HOMFLYPT-module-inv} for their module structures.

Let $B$ be a closed braid, and $K_1,\dots,K_m$ the components of $B$. To each $K_i$, we assign a homogeneous variable $X_i$ of degree $2$. Define graded rings $R_B:=\Q[X_1,\dots,X_m]$ and $R_{B,r}:=\Q[X_2-X_1,\dots,X_m-X_1]$.\footnote{Note that $R_{B,r}$ does note depend on the ordering of the components. In fact, for any $1\leq j \leq m$, ${R_{B,r}=\Q[X_1-X_j,\dots,X_{j-1}-X_j,X_{j+1}-X_j,\dots,X_m-X_j]}$.} Let $H^{\star,j,k}(B) = \oplus_{i\in \zed} H^{i,j,k}(B)$ and $H_r^{\star,j,k}(B) = \oplus_{i\in \zed} H_r^{i,j,k}(B)$. According to Lemma \ref{lemma-HOMFLYPT-module-inv}, $H^{\star,j,k}(B)$ (resp. $H_r^{\star,j,k}(B)$) is a $\zed$-graded $R_B$-module (resp. $R_{B,r}$-module.) Let $\m=(X_1,\dots,X_m)$ be the maximal homogeneous ideal of $R_B$, and $\m_r=(X_2-X_1,\dots,X_m-X_1)$ be the maximal homogeneous ideal of $R_{B,r}$. Note that $R_B/\m$ (resp. $R_{B,r}/\m_r$) is also a $\zed$-graded $R_B$-module (resp. $R_{B,r}$-module.) So $\Tor^{R_B}_p(R_B/\m,H^{\star,j,k}(B))$ and $\Tor^{R_{B,r}}_p(R_{B,r}/\m_r,H_r^{\star,j,k}(B))$ are both $\zed$-graded $\Q$-spaces.

\begin{definition}\label{def-Betti}
The Betti numbers of $H(B)$ and $H_r(B)$ are defined to be
\begin{eqnarray*}
\beta_B(p,q,j,k) & := & \dim_\Q \Tor^{R_B}_p(R_B/\m,H^{\star,j,k}(B))^q, \\
\beta_{B,r}(p,q,j,k) & := & \dim_\Q \Tor^{R_{B,r}}_p(R_{B,r}/\m_r,H_r^{\star,j,k}(B))^q,
\end{eqnarray*}
where $\Tor^{R_B}_p(R_B/\m,H^{\star,j,k}(B))^q$ (resp. $\Tor^{R_{B,r}}_p(R_{B,r}/\m_r,H_r^{\star,j,k}(B))^q$) is the homogeneous component of \linebreak $\Tor^{R_B}_p(R_B/\m,H^{\star,j,k}(B))$ (resp. $\Tor^{R_{B,r}}_p(R_{B,r}/\m_r,H_r^{\star,j,k}(B))$) of degree $q$.
\end{definition}

Clearly, $\beta_B(p,q,j,k)$ and $\beta_{B,r}(p,q,j,k)$ are defined for $(p,q,j,k) \in \zed_{\geq0}\times\zed^3$. 

The main technical tool we use to study the Betti numbers are minimal free resolutions. We will review these in Section \ref{sec-minimal-res} below. The following are some basic properties of the Betti numbers of the HOMFLYPT homology.

\begin{lemma}\label{lemma-Betti-inv}
\begin{enumerate}[1.]
  \item For every $(p,q,j,k) \in \zed_{\geq0}\times\zed^3$, $\beta_B(p,q,j,k)$ and $\beta_{B,r}(p,q,j,k)$ are invariant under Markov moves of $B$. 
	\item $\beta_B(p,q,j,k) = \beta_{B,r}(p,q,j,k) = 0$ for all but finitely many $(p,q,j,k) \in \zed_{\geq0}\times\zed^3$. 
\end{enumerate}
\end{lemma}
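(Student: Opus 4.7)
My plan is to treat the two parts separately; both reduce to general facts from commutative algebra together with the two module-theoretic statements cited earlier in the paper.

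For part (1), the heavy lifting is done by Lemma \ref{lemma-HOMFLYPT-module-inv}: up to an isomorphism of $\zed$-graded modules (and a possible relabelling of the components $K_\ell$ of $B$, which induces an automorphism of $R_B$ permuting the $X_\ell$'s), the graded $R_B$-module $H^{\star,j,k}(B)$, as well as the graded $R_{B,r}$-module $H_r^{\star,j,k}(B)$, is unchanged under Markov moves on $B$. Since $\Tor^{R_B}_p(R_B/\m,-)$ is a functor on the category of $\zed$-graded $R_B$-modules and the maximal homogeneous ideal $\m$ is preserved by any permutation of the $X_\ell$'s, the graded dimensions of $\Tor^{R_B}_p(R_B/\m, H^{\star,j,k}(B))$ are unchanged. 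This gives Markov invariance of $\beta_B(p,q,j,k)$; the same reasoning with $R_B,\m$ replaced by $R_{B,r},\m_r$ gives invariance of $\beta_{B,r}(p,q,j,k)$.

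For part (2), I would assemble three finiteness inputs. First, the braid $B$ has only finitely many crossings, so the chain complex model for $H(B)$ is a finite complex whose terms have $(j,k)$-support in a bounded rectangle, and each $X_\ell$ changes only the $i$-degree; hence $H^{\star,j,k}(B)$ vanishes outside a finite set of $(j,k)$. Second, Lemma \ref{lemma-HOMFLYPT-module} supplies Rasmussen's observation: for each $(j,k)$, the module $H^{\star,j,k}(B)$ is a finitely generated graded $R_B$-module. Third, Hilbert's syzygy theorem gives $H^{\star,j,k}(B)$ a minimal graded free resolution of length at most $m$, each term a finitely generated free $R_B$-module. Tensoring with $R_B/\m$ then shows that $\Tor_p^{R_B}(R_B/\m,H^{\star,j,k}(B))$ vanishes for $p>m$ and, for each admissible $p$, is concentrated in finitely many internal degrees $q$. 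Combining these three bounds yields the desired vanishing of $\beta_B(p,q,j,k)$ outside a finite subset of $\zed_{\ge 0}\times\zed^3$, and the argument for $\beta_{B,r}$ is identical in structure, with $R_{B,r},\m_r$ in place of $R_B,\m$.

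The main technical care required is bookkeeping. For (1), one needs to verify that the relabelling of the $X_\ell$'s induced by Markov moves is really accounted for in Lemma \ref{lemma-HOMFLYPT-module-inv}; this is routine but essential, since otherwise the two Tor groups would not obviously be taken over the same ring. For (2), one must check that the $X_\ell$-action indeed operates within a single $(j,k)$-slice so that the $R_B$-structure restricts to each $H^{\star,j,k}(B)$, and that finite generation on the nose (rather than merely for the total module $H(B)$) is provided by Lemma \ref{lemma-HOMFLYPT-module}. Neither of these should present a serious obstacle given the standard construction of the HOMFLYPT chain complex.
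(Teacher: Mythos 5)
Your proposal is correct and follows essentially the same route as the paper: part (1) is deduced from the module-isomorphism invariance of Lemma \ref{lemma-HOMFLYPT-module-inv} (the paper's identification of $R_B$ with $R_{B'}$ already absorbs the relabelling you worry about), and part (2) from finite generation of $H(B)$ over $R_B$ together with the finiteness of minimal free resolutions (Theorem \ref{thm-syzygy} and Lemma \ref{lemma-mfs-Betti}). The paper states this in two lines; your write-up simply fills in the same details explicitly.
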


It is a standard fact that one can recover from the Betti numbers the graded dimension of a graded module over a polynomial ring. Based on this, we can easily recover the Poincar\'e polynomials of $H(B)$ and $H_r(B)$ from the Betti numbers $\beta_B(p,q,j,k)$ and $\beta_{B,r}(p,q,j,k)$. Let us first normalize the binomial numbers by
\begin{equation}\label{eq-binomial}
\bn{n}{k}= \begin{cases}
\frac{n!}{k!(n-k)!} &\text{if } 0\leq k \leq n, \\
1 & \text{if } n=-1 \text{ and } k=0, \\
0 &\text{otherwise.}
\end{cases}
\end{equation}

\begin{definition}\label{def-Betti-polynomial}
\begin{eqnarray*}
\mathcal{P}_B(x, y, a, b) & := & \sum_{(p,q,j,k) \in \zed_{\geq0}\times\zed^3} \beta_B(p,q,j,k) \cdot x^p \cdot (\sum_{i\in \zed} y^{2i+q} \cdot \bn{i+m-1}{i})\cdot a^j \cdot b^{\frac{k-j}{2}} \\
& = & \sum_{(p,q,j,k) \in \zed_{\geq0}\times\zed^3} \beta_B(p,q,j,k) \cdot x^p \cdot \frac{y^q}{(1-y^2)^m}\cdot a^j \cdot b^{\frac{k-j}{2}}
\end{eqnarray*}
and
\begin{eqnarray*}
\mathcal{P}_{B,r}(x, y, a, b) & := & \sum_{(p,q,j,k) \in \zed_{\geq0}\times\zed^3} \beta_{B,r}(p,q,j,k) \cdot x^p \cdot (\sum_{i\in \zed} y^{2i+q} \cdot \bn{i+m-2}{i})\cdot a^j \cdot b^{\frac{k-j}{2}} \\
& = & \sum_{(p,q,j,k) \in \zed_{\geq0}\times\zed^3} \beta_B(p,q,j,k) \cdot x^p \cdot \frac{y^q}{(1-y^2)^{m-1}}\cdot a^j \cdot b^{\frac{k-j}{2}},
\end{eqnarray*}
where $m$ is the number of components of the closed braid $B$.
\end{definition}

\begin{lemma}\label{lemma-Poincare-polynomials}
Polynomials $\mathcal{P}_B(x, y, a, b)$ and $\mathcal{P}_{B,r}(x, y, a, b)$ are invariant under Markov moves of $B$. Moreover, 
\begin{eqnarray*}
\mathcal{P}_B(-1, y, a, b) & = & \sum_{(i,j,k) \in \zed^3} y^i \cdot a^j \cdot b^{\frac{k-j}{2}} \cdot \dim_\Q H^{i,j,k}(B), \\
\mathcal{P}_{B,r}(-1, y, a, b) & = & \sum_{(i,j,k) \in \zed^3} y^i \cdot a^j \cdot b^{\frac{k-j}{2}} \cdot \dim_\Q H_r^{i,j,k}(B).
\end{eqnarray*}
Consequently, $\mathcal{P}_B(-1, y, a, -1) = -\frac{P_B(a,y)}{y-y^{-1}}$ and $\mathcal{P}_{B,r}(-1, y, a, -1) = P_B(a,y)$, where $P_B$ is a normalization of the HOMFLYPT polynomial.
\end{lemma}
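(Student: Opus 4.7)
The plan is to prove the three claims in sequence, with the second being the main computation and the other two following essentially formally.

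The Markov invariance of $\mathcal{P}_B$ and $\mathcal{P}_{B,r}$ is immediate from Lemma \ref{lemma-Betti-inv}(1), together with the fact that the number of components $m$ is itself a Markov invariant, so the denominator $(1-y^2)^m$ (respectively $(1-y^2)^{m-1}$) appearing in the definition makes invariant sense.

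For the evaluation at $x=-1$, the key tool is the classical Hilbert-series identity that arises by taking the alternating sum of Hilbert series along a minimal free resolution, to be recalled in Section \ref{sec-minimal-res}: for any finitely generated $\zed$-graded module $M$ over $R=\Q[X_1,\ldots,X_m]$ with each $X_i$ in degree $2$,
\begin{equation*}
\sum_{q\in\zed}(\dim_\Q M^q)\, y^q \;=\; \frac{1}{(1-y^2)^m}\sum_{p,q}(-1)^p\bigl(\dim_\Q\Tor^R_p(R/\m,M)^q\bigr)\, y^q.
\end{equation*}
Applying this with $M=H^{\star,j,k}(B)$, whose $R_B$-module structure and grading by the first index $i$ come from Lemma \ref{lemma-HOMFLYPT-module-inv}, yields
\begin{equation*}
\sum_{i\in\zed} y^i \dim_\Q H^{i,j,k}(B) \;=\; \frac{1}{(1-y^2)^m}\sum_{p,q}(-1)^p \beta_B(p,q,j,k)\, y^q.
\end{equation*}
Multiplying by $a^j b^{(k-j)/2}$ and summing over $(j,k)$ recovers the claimed formula for $\mathcal{P}_B(-1,y,a,b)$, when compared with the second expression in Definition \ref{def-Betti-polynomial}. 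The reduced case is identical, with $R_{B,r}$ (which has $m-1$ generators) in place of $R_B$, producing the denominator $(1-y^2)^{m-1}$.

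The third assertion then follows by substituting $b=-1$ into the formulas just proved, giving the signed Euler characteristic
\begin{equation*}
\mathcal{P}_B(-1,y,a,-1)=\sum_{i,j,k}(-1)^{(k-j)/2}\, y^i a^j \dim_\Q H^{i,j,k}(B),
\end{equation*}
and similarly for $\mathcal{P}_{B,r}$. That this signed Euler characteristic recovers the HOMFLYPT polynomial in the stated normalization is essentially built into the construction of $H$ and $H_r$ in \cite[Section 2]{Ras2}, once one matches grading and sign conventions; the factor $-(y-y^{-1})$ between the unreduced and reduced versions reflects the one extra polynomial generator present in $R_B$ but not in $R_{B,r}$. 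The main potential obstacle is simply verifying that the sign and grading conventions in the definitions of $H$, $H_r$, and the normalization $P_B$ are compatible; the underlying algebraic identity, namely the minimal-free-resolution expression for the Hilbert series, is standard.
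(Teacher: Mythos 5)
Your proposal is correct and follows essentially the same route as the paper: Markov invariance comes from the invariance of the Betti numbers (Lemma \ref{lemma-Betti-inv}), and the evaluation at $x=-1$ is exactly the paper's Lemma \ref{lemma-Poincare-polynomial-general}, i.e.\ the standard recovery of the Hilbert series as the alternating sum of graded ranks along a minimal free resolution, applied to $H^{\star,j,k}(B)$ over $R_B$ and to $H_r^{\star,j,k}(B)$ over $R_{B,r}$. The final HOMFLYPT identity is likewise treated as a consequence of Rasmussen's Euler-characteristic computation in both your write-up and the paper.
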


\begin{remark}
Note that, for any non-vanishing homogeneous element of the middle HOMFLYPT homology, its first and second $\zed$-gradings always have the opposite parity. By Lemma \ref{lemma-Poincare-polynomials}, for fixed $(j,k)\in \zed^2$,
\begin{equation}\label{eq-hilbert}
\dim_\Q H^{2T+1-j,j,k}(B)= \sum_{(p,q)\in \zed_{\geq0}\times \zed} (-1)^p\cdot\beta_B(p,q,j,k)\cdot\bn{T+m-\frac{j+q+1}{2}}{T-\frac{j+q-1}{2}}.
\end{equation}
Note that, when $T \geq \max\{\frac{j+q-1}{2}~|~\beta_B(p,q,j,k)\neq 0\}$, the right hand side of Equation \eqref{eq-hilbert} is a polynomial of $T$. Thus, the Betti numbers determine when the Hilbert function of $H^{\star,j,k}(B)$ becomes its Hilbert polynomial. This was implicitly asked in \cite[Question 1.7]{Wu-hilbert}.
\end{remark}

It is another standard fact that one can recover from the Betti numbers the projective dimension of a graded module over a polynomial ring. For $H(B)$ and $H_r(B)$, we have the following lemma.

\begin{lemma}\label{lemma-pd}
\begin{enumerate}
	\item $\pd_{R_B} H(B)=\deg_x \mathcal{P}_B(x, y, a, b)$, where $\pd_{R_B} H(B)$ is the projective dimension of $H(B)$ over $R_B$.
	\item $\pd_{R_{B,r}} H_r(B)=\deg_x \mathcal{P}_{B,r}(x, y, a, b)$, where $\pd_{R_{B,r}} H_r(B)$ is the projective dimension of $H_r(B)$ over $R_{B,r}$.
\end{enumerate}
\end{lemma}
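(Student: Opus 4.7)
The plan is to reduce the statement to the standard commutative-algebra fact that, for a finitely generated graded module $M$ over a standard graded polynomial ring $R$ with maximal homogeneous ideal $\m$,
$$\pd_R M = \max\{p \geq 0 \mid \Tor^R_p(R/\m,M)\neq 0\},$$
which will be reviewed in Section \ref{sec-minimal-res} via minimal free resolutions (the rank of the $p$-th term of the minimal free resolution of $M$ equals $\dim_\Q \Tor^R_p(R/\m,M)$, and the length of that resolution is $\pd_R M$).

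First I would observe that, because each variable $X_i$ acts on $H(B)$ preserving the second and third $\zed$-gradings (and shifts only the first), there is an honest decomposition of $R_B$-modules
$$H(B)=\bigoplus_{(j,k)\in\zed^2} H^{\star,j,k}(B).$$
By Lemma \ref{lemma-Betti-inv}.2 only finitely many summands are nonzero, and each is finitely generated over $R_B$ since its minimal number of generators equals $\sum_q \beta_B(0,q,j,k)<\infty$. Using that $\Tor$ commutes with direct sums and that the projective dimension of a finite direct sum is the maximum of the summands' projective dimensions, we obtain
$$\pd_{R_B} H(B)=\max_{(j,k)}\pd_{R_B} H^{\star,j,k}(B).$$

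Now the standard fact applied to each summand, combined with the internal $q$-grading refinement encoded in Definition \ref{def-Betti}, gives
$$\pd_{R_B} H^{\star,j,k}(B)=\max\{p \mid \exists\, q \text{ with } \beta_B(p,q,j,k)\neq 0\}.$$
Taking the maximum over $(j,k)$ and comparing with Definition \ref{def-Betti-polynomial}, this is exactly $\deg_x \mathcal{P}_B(x,y,a,b)$, which proves part (1). Part (2) is identical once $R_B,\m,H$ are replaced by $R_{B,r},\m_r,H_r$. There is no real obstacle: the only thing that warrants explicit verification is that $X_i$ (respectively $X_i-X_1$) acts with bidegree $(0,0)$ on the $(j,k)$-grading, so that the displayed decomposition is a decomposition of modules and not merely of $\Q$-vector spaces.
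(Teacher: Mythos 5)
Your proposal is correct and takes essentially the same route as the paper, which deduces the lemma from the minimal-free-resolution fact that $\pd_R M=\max\{p\mid \Tor^R_p(R/\m,M)\neq 0\}$ (Lemma \ref{lemma-mfs-Betti}); the paper merely leaves implicit the decomposition $H(B)=\bigoplus_{(j,k)}H^{\star,j,k}(B)$ into $R_B$-submodules and the compatibility of $\pd$ and $\Tor$ with finite direct sums, which you spell out (non-negativity of the Betti numbers also silently guarantees that no cancellation occurs in the coefficient of $x^p$ of $\mathcal{P}_B$). One small polish: finite generation of each $H^{\star,j,k}(B)$ is most directly seen from its being a direct summand of the finitely generated $R_B$-module $H(B)$ (Lemma \ref{lemma-HOMFLYPT-module}), rather than from finiteness of $\sum_q\beta_B(0,q,j,k)$, which is mildly circular.
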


Lemmas \ref{lemma-Betti-inv}, \ref{lemma-Poincare-polynomials} and \ref{lemma-pd} will be proved in Section \ref{sec-Betti} below.

Our results start with the observation that the Betti numbers of the middle HOMFLYPT homology $H$ and its reduction $H_r$ are essentially the same. 

\begin{theorem}\label{thm-Betti}
$\beta_B(p,q,j,k) = \beta_{B,r}(p,q-1,j,k)$ for all $(p,q,j,k) \in \zed_{\geq0}\times\zed^3$. In particular, if $B$ is a knot, then $\beta_B(0,q,j,k) = \dim_\Q H_r^{q-1,j,k}(B) =\dim_\Q \overline{H}^{q-1,j,k}(B)$ and $\beta_B(p,q,j,k)=0$ whenever $p>0$, where $\overline{H}$ is the reduced HOMFLYPT homology defined in \cite{Ras2}.
\end{theorem}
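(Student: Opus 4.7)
The plan is to recognize $H_r$ as the quotient of $H$ by the regular element $X_1$ (with an appropriate grading shift) and to push a minimal free resolution of $H$ over $R_B$ down to a minimal free resolution of $H_r$ over $R_{B,r}$. Two key inputs from Section \ref{sec-HOMFLYPT-mod} will be needed: (i) multiplication by $X_1$ is injective on $H$, and (ii) there is a natural isomorphism of graded $R_{B,r}$-modules $H_r \cong H/X_1 H$ after shifting the first grading of $H/X_1 H$ down by $1$, where $R_{B,r}$ acts on $H/X_1 H$ through the surjection $R_B = R_{B,r}[X_1] \twoheadrightarrow R_{B,r}$ sending $X_1 \mapsto 0$. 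The unit shift in (ii) is exactly what will produce the $q \mapsto q-1$ in the theorem; it also matches the unknot sanity check, where $H(\text{unknot}) = \Q[X_1]$ sits in odd first gradings $1, 3, 5, \ldots$ while $H_r(\text{unknot}) = \Q$ sits in first grading $0$.

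Granting (i) and (ii), the argument is formal. Let $F_\bullet \to H \to 0$ be a minimal graded free resolution over $R_B$, so by definition $F_p = \bigoplus_{q, j, k} R_B\{-q\}^{\beta_B(p, q, j, k)}$. Since $X_1$ is a non-zero-divisor on $H$, tensoring with $R_B/(X_1) \cong R_{B,r}$ preserves exactness and yields a free resolution $F_\bullet/X_1 F_\bullet \to H/X_1 H \to 0$ over $R_{B,r}$. This resolution is again minimal, because the reduced differentials have image in $\m_r \cdot (F_{\bullet-1}/X_1 F_{\bullet-1})$: the originals land in $\m \cdot F_{\bullet-1}$, and $\m/(X_1)$ corresponds to $\m_r$ under $R_B/(X_1) \cong R_{B,r}$. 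Reading off ranks directly gives
\[ \dim_\Q \Tor^{R_{B,r}}_p(R_{B,r}/\m_r, H/X_1 H)^q = \beta_B(p, q, j, k) \]
in every fixed $(j,k)$-bigrading, and applying the grading shift from (ii) converts this into $\beta_{B,r}(p, q-1, j, k) = \beta_B(p, q, j, k)$.

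For the knot case, $m = 1$ forces $R_{B,r} = \Q$, a field, so $\Tor^{R_{B,r}}_p(\Q, M)^q$ vanishes for $p \geq 1$ and equals $\dim_\Q M^q$ for $p = 0$. Consequently $\beta_{B,r}(p, q, j, k)$ equals $\dim_\Q H_r^{q, j, k}(B)$ when $p = 0$ and $0$ otherwise, which gives $\beta_B(p, q, j, k) = 0$ for $p > 0$ and $\beta_B(0, q, j, k) = \dim_\Q H_r^{q-1, j, k}(B)$. The further identification $H_r^{q-1, j, k}(B) = \overline{H}^{q-1, j, k}(B)$ is immediate from the footnote, since knots are non-split.

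The main obstacle is securing the two inputs from Section \ref{sec-HOMFLYPT-mod}, particularly the regularity of $X_1$ on $H$; once (i) is in place, (ii) will really be unpacking the definition. The cleanest route I see is to establish a stronger splitting at the chain level: the matrix-factorization complex computing $H$ should be free over $\Q[X_1]$, with a $\Q[X_1]$-basis whose reduction modulo $X_1$ (together with a first-grading shift by $1$) is the complex defining $H_r$. With such a splitting in hand, regularity of $X_1$ is automatic, the isomorphism in (ii) is tautological, and the theorem follows as above.
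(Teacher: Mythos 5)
Your argument is correct, and it reaches the change-of-rings isomorphism by a genuinely different mechanism than the paper. Both proofs rest on the same two inputs, which are indeed available in Section \ref{sec-HOMFLYPT-mod}: the chain-level splitting $C_0(B)\cong C_r(B)\otimes_\Q\Q[X_1]$ (quoted from Rasmussen) yields, via Remark \ref{remark-quotient}, that $H(B)\cong H_r(B)\otimes_\Q\Q[X_1]\{1,0,0\}$, so $X_1$ is regular on $H(B)$ and $H^{\star,j,k}(B)/X_1H^{\star,j,k}(B)\cong H_r^{\star,j,k}(B)\{1\}$ -- exactly your (i) and (ii), so the ``main obstacle'' you flag is already settled in the paper, and your unit shift matches the paper's conventions. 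Where you diverge is in how $\Tor^{R_B}_p(R_B/\m,H^{\star,j,k}(B))\cong\Tor^{R_{B,r}}_p(R_{B,r}/\m_r,H^{\star,j,k}(B)/X_1H^{\star,j,k}(B))$ is obtained: the paper sets up and proves a graded Grothendieck base-change spectral sequence (Theorem \ref{thm-SS-base-ring}, with $A=R_{B,r}/\m_r$, $B=R_B/(X_1)$) and shows it collapses because $\Tor^{R_B}_{\geq1}(R_B/(X_1),H^{\star,j,k}(B))=0$, whereas you reduce a minimal free resolution $F_\bullet$ of $H^{\star,j,k}(B)$ modulo $X_1$, using regularity of $X_1$ to keep exactness and the fact that the image of $\m$ in $R_B/(X_1)\cong R_{B,r}$ is $\m_r$ to keep minimality, then read off graded ranks. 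Your route is more elementary and shorter: it uses only the minimal-resolution material of Section \ref{sec-minimal-res} and bypasses the double-complex/spectral-sequence machinery of Theorem \ref{thm-SS-base-ring} and Lemma \ref{lemma-universal-projective}; the paper's route buys a statement of independent generality and avoids having to track minimality explicitly. Your treatment of the knot case ($R_{B,r}=\Q$, $\m_r=0$, and $H_r=\overline{H}$ since knots are non-split) coincides with the paper's. The only cosmetic point is the bookkeeping of the $(j,k)$-bigrading: the Betti numbers are defined per fixed $(j,k)$, so one should either resolve each $H^{\star,j,k}(B)$ separately or, as you implicitly do, take the direct sum of such resolutions and restrict to a fixed bigraded piece at the end; this does not affect correctness.
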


Theorem \ref{thm-Betti} will also be proved in Section \ref{sec-Betti} below.

\begin{remark}
By Lemma \ref{lemma-Poincare-polynomials}, the Betti numbers of $H(B)$ determine the Poincar\'e polynomial of $H(B)$. By Theorem \ref{thm-Betti}, the Betti numbers of $H(B)$ further determine the Poincar\'e polynomial of $H_r(B)$. Comparing the definition of $H_r$ in Section \ref{sec-HOMFLYPT-mod} below and that of the reduced HOMFLYPT homology $\overline{H}$ in \cite{Ras2}, we know the Poincar\'e polynomial of $\overline{H}(B)$ is equal to that of $H_r(B)$ times $(1+a^{-2}b)^n$ for some $n \in \mathbb{N}_{\geq0}$. So, provided we know what $n$ is, the Betti numbers of $H(B)$ also determine the Poincar\'e polynomial of $\overline{H}(B)$.

Some researchers prefer to work with link homologies represented by a finite set of data. If $B$ is a knot, its reduced HOMFLYPT homology is finite dimensional, which is why these researchers prefer this version of the HOMFLYPT homology over others. Theorem \ref{thm-Betti} shows that, for knots, the Betti numbers are the dimensions of homogeneous components of the reduced HOMFLYPT homology. If $B$ is a link with multiple components, then its reduced HOMFLYPT homology becomes infinite dimensional. However, its Betti numbers remain a finite set of data. In this sense, the Betti numbers may play the same role for links as that played by the reduced HOMFLYPT homology for knots.
\end{remark}

It turns out that, up to a factor of $ab^{-1}$, the polynomial $\mathcal{P}_B(x, y, a, b)$ is multiplicative under split union of closed braids. So it follows from Lemma \ref{lemma-pd} that the projective dimension of $H(B)$ is additive under split union of closed braids. This leads to a new obstruction to split links. First, let us recall the definition of the split union of braids.

\begin{definition}\label{def-split}
Denote by $\mathbf{B}_k$ the braid group on $k$ strands with standard generators $\sigma_1^{\pm1},\dots,\sigma_{k-1}^{\pm1}$. Let $B_1$ and $B_2$ be closed braids with braid words $w_1=\sigma_{i_1}^{\mu_1}\cdots \sigma_{i_{l_1}}^{\mu_{l_1}} \in \mathbf{B}_{k_1}$ and $w_2=\sigma_{j_1}^{\nu_1}\cdots \sigma_{j_{l_2}}^{\nu_{l_2}} \in \mathbf{B}_{k_2}$, respectively. The split union $B_1 \sqcup B_2$ of $B_1$ and $B_2$ is the closed braid with the braid word $\sigma_{i_1}^{\mu_1}\cdots \sigma_{i_{l_1}}^{\mu_{l_1}}\sigma_{j_1+k_1}^{\nu_1}\cdots \sigma_{j_{l_2}+k_1}^{\nu_{l_2}} \in \mathbf{B}_{k_1+k_2}$.

Clearly, the operation of split union is associative. And it is commutative up to Markov moves. 

A closed braid $B$ is $n$-split if and only if there exist $n$ closed braids $B_1,\dots,B_n$ such that $B=B_1\sqcup\cdots\sqcup B_n$.

A link is $n$-split if and only if it is equivalent to an $n$-split closed braid. 
\end{definition}

One can see that every link is $1$-split. And a link is $2$-split if and only if it is split in the classical sense.

Now we can state our results on split links.

\begin{theorem}\label{thm-split}
\begin{enumerate}
	\item For any two closed braids $B_1$ and $B_2$, 
	\[
	\mathcal{P}_{B_1 \sqcup B_2}(x, y, a, b)= a\cdot b^{-1} \cdot \mathcal{P}_{B_1}(x, y, a, b) \cdot \mathcal{P}_{B_2}(x, y, a, b). 
	\]
	Consequently, $\pd_{R_{B_1 \sqcup B_2}} H(B_1 \sqcup B_2) = \pd_{R_{B_1}} H(B_1)+\pd_{R_{B_2}} H(B_2)$.
	\item If $B$ is a closed braid diagram of an $m$-component $n$-split link, then $\pd_{R_B} H(B) \leq m-n$.
\end{enumerate}
\end{theorem}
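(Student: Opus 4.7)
The plan is to prove part (1) by establishing a K\"unneth-type decomposition for the middle HOMFLYPT homology under split union, and then obtain part (2) by iterating part (1) while using Theorem \ref{thm-Betti} to control each factor.

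For part (1), the starting point is that a braid word for $B_1\sqcup B_2$ uses only generators $\sigma_i$ with $i<k_1$ from the $B_1$-block and generators $\sigma_j$ with $j\geq k_1$ from the $B_2$-block, which commute pairwise. Consequently, at the level of the matrix factorizations (equivalently, the complexes of Soergel bimodules) that define $H$ in \cite{Ras2}, the object assigned to $B_1\sqcup B_2$ is naturally an external tensor product over $\Q$ of the objects assigned to $B_1$ and $B_2$, up to a grading shift coming from the convention-dependent normalization of each block. Since $\Q$ is a field, the K\"unneth formula then yields an isomorphism $H(B_1\sqcup B_2)\cong H(B_1)\otimes_\Q H(B_2)$ with an $(i,j,k)$-grading shift that, after packaging through Definition \ref{def-Betti-polynomial}, produces exactly the factor $ab^{-1}$. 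One also needs to check that the $X_\ell$-action splits accordingly: variables indexed by components of $B_1$ act only on the first tensor factor and variables indexed by components of $B_2$ act only on the second, so $H(B_1\sqcup B_2)$ is the external tensor product $H(B_1)\boxtimes H(B_2)$ as a module over $R_{B_1\sqcup B_2}=R_{B_1}\otimes_\Q R_{B_2}$.

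Given this module-theoretic decomposition, multiplicativity of $\mathcal{P}$ is immediate from the behavior of minimal graded free resolutions under external tensor product: if $F_\bullet\to H(B_1)$ and $G_\bullet\to H(B_2)$ are minimal graded free resolutions over $R_{B_1}$ and $R_{B_2}$, then $F_\bullet\otimes_\Q G_\bullet\to H(B_1)\boxtimes H(B_2)$ is a minimal graded free resolution over $R_{B_1}\otimes_\Q R_{B_2}$. Tensoring further with $R_{B_1\sqcup B_2}/\m$ computes Betti numbers and yields
\[
\beta_{B_1\sqcup B_2}(p,q,j,k)=\sum\beta_{B_1}(p_1,q_1,j_1,k_1)\cdot\beta_{B_2}(p_2,q_2,j_2,k_2),
\]
where the sum is over indices adding to $(p,q,j,k)$ after the $(0,0,1,-1)$ shift in $(p,q,j,k)$ that corresponds to $ab^{-1}$. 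This is precisely the claimed identity. Since all Betti numbers are nonnegative, the leading $x$-coefficients cannot cancel in the product, so by Lemma \ref{lemma-pd} the projective dimensions add.

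For part (2), the essential auxiliary bound is that for any closed braid $B'$ with $m'$ components one has $\pd_{R_{B'}}H(B')\leq m'-1$. This follows from Theorem \ref{thm-Betti}: the equality $\beta_{B'}(p,q,j,k)=\beta_{B',r}(p,q-1,j,k)$ forces $\beta_{B'}(p,\cdot,\cdot,\cdot)$ to vanish for $p\geq m'$, because $R_{B',r}$ is a polynomial ring in $m'-1$ variables and so has global dimension $m'-1$. Writing $B=B_1\sqcup\cdots\sqcup B_n$ with $B_i$ having $m_i$ components and $m=\sum_i m_i$, a repeated application of part (1) (using associativity of $\sqcup$ and Markov-invariance from Lemma \ref{lemma-Betti-inv}) yields
\[
\pd_{R_B}H(B)=\sum_{i=1}^n\pd_{R_{B_i}}H(B_i)\leq\sum_{i=1}^n(m_i-1)=m-n.
\]

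The main obstacle is the K\"unneth step in part (1): verifying that the natural monoidal structure underlying the construction of \cite{Ras2} identifies split union with an external tensor product at the chain level, and that passage to middle HOMFLYPT homology respects this both as $\Q$-vector spaces and as modules over the polynomial rings, with the precise $(0,0,1,-1)$ grading shift corresponding to $ab^{-1}$. Once this compatibility is pinned down, the rest is standard homological algebra together with the results already established earlier in the paper.
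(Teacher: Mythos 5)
Your proposal is correct and follows essentially the same route as the paper: the tensor decomposition $H(B_1\sqcup B_2)\cong H(B_1)\otimes_\Q H(B_2)\{0,1,-1\}$ (recorded in the paper as isomorphism \eqref{eq-H-tensor}, so the ``main obstacle'' you flag is already available as a standing property of the middle HOMFLYPT homology), the fact that minimal graded free resolutions tensor over $\Q$ to give multiplicativity of the Betti polynomials (the paper's Lemma \ref{lemma-mfs-tensor}), and the bound $\pd_{R_{B'}}H(B')\leq m'-1$ via Theorem \ref{thm-Betti} and the global dimension of $R_{B',r}$ (the paper's Lemma \ref{lemma-pd-m-1}), followed by iterating part (1). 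Your explicit remarks about the $(j,k)$-shift producing $ab^{-1}$ and the nonnegativity of Betti numbers preventing cancellation of leading $x$-terms match the paper's computation.
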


Theorem \ref{thm-split} will be proved in Section \ref{sec-split} below.

\begin{remark}
The distant from a link to being split is usually measure by the splitting number, that is, the minimal number of crossing changes needed to make the link split. Consider the $2$-strand closed braid $B_n$ with the braid word $\sigma_1^{2n}\in \mathbf{B}_2$. The splitting number of $B_n$ is $n$. But, by Theorem \ref{thm-split}, $0 \leq \pd_{R_{B_n}} H(B_n) \leq 1$. This example shows that $\pd_{R_B} H(B)$ is not a good indicator of how far a link is from being split. See \cite{Batson-Seed} for lower bounds of the splitting number from the Khovanov homology.

On the other hand, $\pd_{R_B} H(B)$ may turn out to be a good indicator of how many times we can split a link. Theorem \ref{thm-split} seems to suggest that, the more times we can split a link, the smaller the projective dimension of its HOMFLYPT homology gets in comparison to the number of components of this link. 
\end{remark}

\begin{conjecture}\label{conj-split}
An $m$-component closed braid $B$ represents an $n$-split link if and only if $\pd_{R_B} H(B) \leq m-n$.
\end{conjecture}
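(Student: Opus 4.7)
The forward (``only if'') direction follows essentially from Theorem~\ref{thm-split}(1). The first step is to verify the uniform bound $\pd_{R_B} H(B) \le m-1$ for every $m$-component closed braid $B$. This comes out of Theorem~\ref{thm-Betti} together with Hilbert's syzygy theorem: the identity $\beta_B(p,q,j,k)=\beta_{B,r}(p,q-1,j,k)$ implies $\pd_{R_B} H(B)=\pd_{R_{B,r}} H_r(B)$, and the latter is bounded by the number of variables in $R_{B,r}$, which is $m-1$. Granted this, if $B=B_1\sqcup\cdots\sqcup B_n$ with $B_i$ having $m_i$ components, iterating Theorem~\ref{thm-split}(1) gives $\pd_{R_B} H(B)=\sum_i \pd_{R_{B_i}} H(B_i)\le\sum_i(m_i-1)=m-n$.

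The reverse (``if'') direction is the substantive content of the conjecture, and I would attempt it by induction on $n$, the case $n=1$ being vacuous. For the inductive step, suppose $\pd_{R_B} H(B)\le m-n$. Passing to the reduced theory via Theorem~\ref{thm-Betti}, the Auslander--Buchsbaum formula gives $\mathrm{depth}_{R_{B,r}} H_r(B)\ge n-1$, so $H_r(B)$ admits a regular sequence of length $n-1$ in the maximal homogeneous ideal $\m_r$, which is generated by the differences $X_i-X_1$. The goal is then to interpret such a regular sequence geometrically: heuristically, a regular element $X_i-X_j$ acting nontrivially should reflect that the components $K_i$ and $K_j$ are topologically separable. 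One would try to isolate a single variable $X_i-X_j$ regular on $H_r(B)$ and show that this forces $B$ to be equivalent to a split union $B'\sqcup B''$ with $K_i\subset B'$ and $K_j\subset B''$. Once a single split is produced, one applies induction to the two pieces, each of which has strictly fewer components and, by Theorem~\ref{thm-split}(1), inherits a correspondingly smaller projective-dimension bound.

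The hard part will be the geometric implication ``regular element on $H_r(B)$ implies topological split.'' No such machinery currently exists in the HOMFLYPT framework, and even the analogous statement for Khovanov homology is open. A promising avenue is to prove a K\"unneth-type recognition principle for $H_r$: a structural characterization of those $R_{B,r}$-modules of the form $H_r(B_1\sqcup B_2)$ by the existence of a tensor factorization of the minimal free resolution compatible with a decomposition of $R_{B,r}$ along a chosen variable $X_i-X_j$. Establishing such a recognition principle would almost certainly require working at the level of the matrix-factorization complexes that define $H(B)$, rather than at the level of homology alone, and that is where essentially new input would be needed.
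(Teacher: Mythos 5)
This statement is stated in the paper as a conjecture, and the paper does not prove it: only the ``only if'' direction is established there, as part (2) of Theorem~\ref{thm-split}, via Lemma~\ref{lemma-pd-m-1} and part (1) of Theorem~\ref{thm-split}. Your argument for that direction is correct and is essentially identical to the paper's: the bound $\pd_{R_B}H(B)\le m-1$ obtained from Theorem~\ref{thm-Betti} (equality of Betti numbers, hence $\pd_{R_B}H(B)=\pd_{R_{B,r}}H_r(B)$) together with the global dimension $m-1$ of $R_{B,r}$ is exactly Lemma~\ref{lemma-pd-m-1}, and iterating the additivity of part (1) is exactly the paper's proof of part (2). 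So on the provable half you and the paper coincide.

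For the ``if'' direction you do not give a proof, and you say so; this matches the status in the paper, where the biconditional is left open. Two cautions about your sketch, though. First, Auslander--Buchsbaum over $R_{B,r}$ gives $\mathrm{depth}\,H_r(B)\ge n-1$, hence a regular sequence of \emph{homogeneous elements} of $\m_r$; there is no reason these can be taken to be the specific differences $X_i-X_j$ (over an infinite field a generic linear form in the $X_i-X_1$ is a nonzerodivisor when the depth is positive, but isolating an honest difference $X_i-X_j$ is already an unjustified reduction). Second, and more fundamentally, the step ``a regular element implies a topological splitting'' is precisely the missing content of the conjecture --- nothing in the paper's framework (which works only at the level of the module structure of $H$ and $H_r$) supplies it, and your own suggestion of a K\"unneth-type recognition principle at the matrix-factorization level is, as you note, new input that would have to be created. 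So the proposal does not establish the statement; it reproves the known implication by the paper's own route and correctly identifies the converse as open.
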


\begin{example}\label{eg-hopf}
Consider the positive Hopf link $B_1$ with braid word $\sigma_1^{2}\in \mathbf{B}_2$. In Section \ref{sec-hopf} below, we will prove that $\pd_{R_{B_1}} H(B_1) = 1$. By Theorem \ref{thm-split}, this confirms the well known fact that the Hopf link does not split. 
\end{example}

For a closed braid $B$, its $\slmf(N)$ homology $H_N(B)$ is also a module over $R_B$. But the projective dimension of $H_N(B)$ over $R_B$ is far less interesting.

\begin{lemma}\label{lemma-sl-N-proj-dim}
Let $B$ be a closed braid of $m$ components. Then, for any $N\geq 1$, $\pd_{R_B} H_N(B) =m$.
\end{lemma}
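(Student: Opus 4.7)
The plan is to apply the graded Auslander-Buchsbaum formula. Since $R_B=\Q[X_1,\dots,X_m]$ is a graded polynomial ring in $m$ variables over $\Q$, with maximal homogeneous ideal $\m$, for any nonzero finitely generated graded $R_B$-module $M$ one has
\[
\pd_{R_B} M + \mathrm{depth}_{\m} M = m.
\]
Thus it suffices to prove that $H_N(B)$ is nonzero and that $\mathrm{depth}_{\m} H_N(B)=0$.

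First I would verify that $H_N(B)$ is a nonzero, finite-dimensional $\Q$-vector space. Non-vanishing is immediate from the graded Euler characteristic, which recovers the (unnormalized) $\slmf(N)$ link polynomial of $B$; for instance, its value at $q=1$ is $N^m\neq 0$. Finite-dimensionality is a standard feature of the unreduced $\slmf(N)$ Khovanov-Rozansky homology defined in \cite{KR1}: the underlying Koszul matrix factorizations carry potentials of the form $x^{N+1}$, so $X_i^{N}$ acts nullhomotopically on each marked point of the chain complex, and the cohomology of the internal differential of each matrix factorization in the hypercube is a finite-dimensional $\Q$-space. The outer differential then preserves finite-dimensionality upon passage to homology.

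With finite-dimensionality in hand, each $X_i$ acts nilpotently on $H_N(B)$, so $\m^k\cdot H_N(B)=0$ for some $k\geq 1$. Choosing a nonzero homogeneous element in $\m^{k-1}\cdot H_N(B)$, which is annihilated by $\m$, shows $\m\in\mathrm{Ass}_{R_B}H_N(B)$, i.e. $\mathrm{depth}_{\m} H_N(B)=0$. The Auslander-Buchsbaum formula then yields $\pd_{R_B}H_N(B)=m-0=m$.

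The only potentially delicate ingredient is the finite-dimensionality of $H_N(B)$ over $\Q$, but this is a well-established property of the Khovanov-Rozansky construction and is almost certainly invoked in the paper's own review of module structures in Section \ref{sec-HOMFLYPT-mod}. Everything else reduces to classical commutative algebra, so the argument should be quite brief. It is also worth noting that this is precisely what fails for $H(B)$ in Theorem \ref{thm-split}: the absence of the extra potential means $X_i$ need not act nilpotently, so $\pd_{R_B}H(B)$ can drop below $m$ and becomes a genuine link invariant.
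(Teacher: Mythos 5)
Your proof is correct and follows essentially the same route as the paper: finite-dimensionality of $H_N(B)$ over $\Q$ forces $\mathrm{depth}\,H_N(B)=0$ (you via $\m\in\mathrm{Ass}_{R_B}H_N(B)$, the paper via the nonexistence of regular sequences of positive length), and the graded Auslander--Buchsbaum formula then gives $\pd_{R_B}H_N(B)=m$. Your explicit check that $H_N(B)\neq 0$ via the $\slmf(N)$ polynomial is a small point the paper leaves implicit, but it does not change the argument.
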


Lemma \ref{lemma-sl-N-proj-dim} will be proved in Section \ref{sec-sl-N} below.

\begin{remark}
Note that the $\slmf(N)$ homology $H_N(B)$ is not just a module over $R_B$. For each component $K_j$, the monomial $X_j^N$ acts on $H_N(B)$ as $0$. So $H_N(B)$ is actually a module over the quotient ring $R_{B,N}:= \Q[X_1,\dots,X_m]/(X_1^N,\dots,X_m^N)$. Since $R_{B,N}$ is a local ring, techniques based on minimal free resolutions should still work. It would be interesting to see what topological information the Betti numbers of $H_N(B)$ over $R_{B,N}$ contain.
\end{remark}

\section{Module Structure of the HOMFLYPT Homology}\label{sec-HOMFLYPT-mod}

In this section, we briefly review the middle HOMFLYPT homology $H$ defined in \cite{Ras2} and its reduction $H_r$. For more details, see \cite{Ras2}.

\subsection{Base rings of chain complexes} For a closed braid, an edge of it is a part of the closed braid that starts and ends at crossings, but contains no crossings in its interior. In the rest of this section, we fix a closed braid $B$ with $m$ components $K_1,\dots,K_m$. We order the edges of $B$ as $1^{st},2^{nd},\dots,M^{th}$ so that the $l^{th}$ edge is on the component $K_l$ for $1 \leq l \leq m$. For $1 \leq l \leq M$, we assign a variable $X_l$ of degree $2$ to the $l^{th}$ edge. For a crossing $c$ of $B$, assume the $k^{th}$ and $l^{th}$ edges are pointing out of $c$, and the $i^{th}$ and $j^{th}$ edges are pointing into $c$. Then $c$ defines a relation $\rho(c)=X_k+X_l-X_i-X_j$. The edge ring of $B$ is the ring $R(B):=\Q[X_1,\dots,X_M]/(\rho(c_1),\dots,\rho(c_n))$, where $c_1,\dots,c_n$ are all the crossings of $B$. The reduced edge ring of $B$ is the ring $R_r(B):=\Q[X_2-X_1,\dots,X_M-X_1]/(\rho(c_1),\dots,\rho(c_n))$. One can see that $R_r(B)$ is a subring of $R(B)$. Moreover,
\begin{equation}\label{eq-ring-tensor}
R(B) = R_r(B) \otimes_\Q \Q[X_1].
\end{equation} 
Note that $R(B)$ and $R_r(B)$ are not the rings $R_B$ and $R_{B,r}$ defined in the introduction.

\subsection{HOMFLYPT homologies} As defined in \cite{Ras2}, the middle complex $(C_0(B), d_+, d_v)$ is a $\zed^3$-graded double cochain complex\footnote{Strictly speaking, $C_0(B)$ is not a double complex since squares in it commute, instead of anti-commute. But this does not affect any of our computations.} of finitely generated graded free $R(B)$-modules with homogeneous differential maps. Its first grading is the grading of the underlying $R(B)$-module. Its second and third gradings are the horizontal and vertical gradings of the double complex. These two gradings are both bounded. The reduced complex $(C_r(B), d_+, d_v)$ is defined by replacing each summand of $R(B)$ in $C_0(B)$ by a summand of $R_r(B)$. Clearly, $(C_r(B), d_+, d_v)$ is a $\zed^3$-graded double cochain complex of finitely generated graded free $R_r(B)$-module with homogeneous differential maps. By \cite[Lemma 2.12]{Ras2}, we know that
\begin{eqnarray}
\label{eq-complex-quotient} C_r(B) & \cong & C_0(B) / X_1 C_0(B), \\
\label{eq-complex-tensor} C_0(B) & \cong & C_r(B) \otimes_\Q \Q[X_1].
\end{eqnarray} 
Note that $C_0(B)$ (resp. $C_r(B)$) is a finitely generated module over $R(B)$ (resp. $R_r(B)$.)

The middle HOMFLYPT homology $H$ and its reduction $H_r$ are defined by
\begin{eqnarray}
\label{eq-H-def} H(B) & := & H(H(C_0(B), d_+), d_v)\{-w+b,w+b-1,w-b+1\},\\
\label{eq-Hbar-def} H_r(B) & := & H(H(C_r(B), d_+), d_v)\{-w+b-1,w+b-1,w-b+1\},
\end{eqnarray}
where $w$ is the writhe of $B$, $b$ is the number of strands in $B$, and ``$\{s, t, u\}$" means shifting the $\zed^3$-grading by the vector $(s, t, u)$. Note here that the definition of $H_r(B)$ is different from that of $\overline{H}(B)$ in \cite{Ras2}. The difference occurs when the closed braid $B$ splits. See \cite[Section 2.10]{Ras2}.

It is proved in \cite{KR2} that $H(B)$ and $H_r(B)$ are invariant as $\zed^3$-graded $\Q$-spaces under Markov moves of $B$.

One of the main advantages of the middle HOMFLYPT homology over the other normalizations of the HOMFLYPT homology is that, up to a grading shift, it is tensorial over $\Q$ under the split union. More precisely, let $B_1$ and $B_2$ be two closed braids. Then
\begin{eqnarray}
\label{eq-C-tensor} C_0(B_1\sqcup B_2) & \cong & C_0(B_1) \otimes_\Q C_0(B_2), \\
\label{eq-H-tensor} H(B_1\sqcup B_2) & \cong & H(B_1) \otimes_\Q H(B_2)\{0,1,-1\},
\end{eqnarray}
where, of course, the isomorphisms preserve the $\zed^3$-grading.

\subsection{Module structures of the HOMFLYPT homologies} Since $R(B)$ (resp. $R_r(B)$) is Noetherian, $H(B)$ (resp. $H_r(B)$) is a finitely generated module over $R(B)$ (resp. $R_r(B)$.) But there is no chance for these module structures to be invariant under Markov moves. This is simply because $R(B)$ and $R_r(B)$ change under Markov moves. But, in \cite[Lemma 3.4]{Ras-2-bridge}, Rasmussen observed that, if $X_k$ and $X_l$ are assigned to edges on the same component of $B$, then their actions on $H(B)$ are the same.\footnote{\cite[Lemma 3.4]{Ras-2-bridge} is about the $\slmf(N)$ homology. But its conclusion and proof remain true for the HOMFLYPT homology.} So $H(B)$ is a finitely generated module over the quotient ring 
\[
R(B)/(\{X_k-X_l~|~\text{the } k^{th} \text{ and } l^{th} \text{ edges are on the same component of }B\}) \cong R_B = \Q[X_1,\dots,X_m].
\]
Similar conclusion holds for $H_r(B)$. We have the following lemma.

\begin{lemma}\cite[Lemma 3.4]{Ras-2-bridge}\label{lemma-HOMFLYPT-module}
Let $B$ be a closed braid diagram, and $K_1,\dots,K_m$ be the components of $B$. To each $K_i$, we assign a variable $X_i$ with degree $2$. Then:
\begin{itemize}
	\item $H(B)$ is a finitely generated $\zed^3$-graded module over the $\zed$-graded ring $R_B:=\Q[X_1,\dots,X_m]$, where the action of any homogeneous element of $R_B$ on $H(B)$ fixes the last two $\zed$-gradings of $H(B)$, but shifts the first by its own degree.
	\item $H_r(B)$ is a finitely generated $\zed^3$-graded module over the $\zed$-graded ring $R_{B,r}:={\Q[X_2-X_1,\dots,X_m-X_1]}$, where the action of any homogeneous element of $R_{B,r}$ on $H_r(B)$ fixes the last two $\zed$-gradings of $H_r(B)$, but shifts the first by its own degree.
\end{itemize}
\end{lemma}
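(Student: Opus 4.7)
The plan is to promote the tautological $R(B)$-action on $C_0(B)$ to an action of the smaller ring $R_B$ on $H(B)$, using the fact that edge variables assigned to the same component become identified once one passes to homology; the argument for $H_r(B)$ over $R_{B,r}$ will then run verbatim over $R_r(B)$ and $C_r(B)$. The easy part is structural: because $C_0(B)$ is a finitely generated graded free $R(B)$-module, $R(B)$ is Noetherian, and both $d_+$ and $d_v$ are $R(B)$-linear and homogeneous (with multiplication by a degree-$d$ element shifting the first $\zed$-grading by $d$ and preserving the other two), the bi-homology $H(H(C_0(B),d_+),d_v)$ is automatically a finitely generated $\zed^3$-graded $R(B)$-module with the required shift behavior, and the overall grading shift in \eqref{eq-H-def} does not disturb this.

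The nontrivial input is the identification of edge variables on the same component, and here I would invoke the local computation underlying \cite[Lemma 3.4]{Ras-2-bridge}: at each crossing $c$ of $B$, with incoming edges $i,j$ and outgoing edges $k,l$ paired so that $(i,k)$ and $(j,l)$ run along the same strand through $c$, the local factor of $C_0(B)$ at $c$ admits a chain contraction witnessing $X_i - X_k \simeq 0$ and $X_j - X_l \simeq 0$ after taking horizontal homology. This is exactly the Koszul-theoretic observation Rasmussen makes in the $\slmf(N)$ setting, and it carries over unchanged to the middle HOMFLYPT complex of \cite{Ras2} since the $\slmf(N)$ complex is obtained by specializing formal potentials in the HOMFLYPT one. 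Taking vertical homology then shows that $X_i - X_k$ and $X_j - X_l$ annihilate $H(B)$.

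Propagating this identification along each component $K_i$, all edge variables on $K_i$ act by one common operator on $H(B)$, which I would call the action of $X_i \in R_B$. The $R(B)$-action on $H(B)$ therefore factors through
\[
R(B)/(X_k - X_l \mid k, l \text{ on the same component of } B) \;\cong\; R_B,
\]
producing the claimed $R_B$-module structure with the correct grading behavior. For $H_r(B)$ I would run the same argument over $R_r(B)$ acting on $C_r(B)$; the identifications \eqref{eq-ring-tensor} and \eqref{eq-complex-tensor} guarantee that the quotient of $R_r(B)$ by the same-component differences is exactly $R_{B,r}$.

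The only real obstacle is the local null-homotopy $X_i - X_k \simeq 0$ at a single crossing; everything else is bookkeeping with gradings. Fortunately this step is already established in \cite{Ras-2-bridge}, so in practice the proof reduces to verifying that Rasmussen's homotopy respects all three $\zed$-gradings of the middle HOMFLYPT complex and then identifying the resulting quotients of $R(B)$ and $R_r(B)$ with $R_B$ and $R_{B,r}$ respectively.
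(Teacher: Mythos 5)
Your proposal is correct and follows essentially the same route as the paper, which simply notes that $H(B)$ (resp.\ $H_r(B)$) is finitely generated over the Noetherian edge ring $R(B)$ (resp.\ $R_r(B)$) and then adapts the null-homotopy argument of \cite[Lemma 3.4]{Ras-2-bridge} to identify the actions of edge variables on the same component, so that the action factors through the quotient isomorphic to $R_B$ (resp.\ $R_{B,r}$). The paper leaves exactly these details to the reader, so nothing further is needed.
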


\begin{proof}
The proof of this lemma is a straightforward adaptation of the proof of \cite[Lemma 3.4]{Ras-2-bridge}. We leave the details to the reader.
\end{proof}

\begin{remark}\label{remark-quotient}
Applying the Universal Coefficient Theorem over $\Q$ to the right hand side of \eqref{eq-complex-tensor}, we get that $H(B) \cong H_r(B) \otimes_\Q \Q[X_1]\{1,0,0\}$ as $\zed^3$-graded $R_B$-modules. Note that $R_{B,r}\cong R_B/(X_1)$ with the isomorphism given by $R_{B,r} \hookrightarrow R_B \twoheadrightarrow R_B/(X_1)$, where ``$\hookrightarrow$" is the standard inclusion, and ``$\twoheadrightarrow$" is the standard quotient map. Identify $R_{B,r}$ and $R_B/(X_1)$ via this isomorphism. Then $H_r(B) \cong H(B)/X_1H(B)\{-1,0,0\}$ as $\zed^3$-graded $R_{B,r}$-modules.
\end{remark}

Next, we show that the module structures of $H(B)$ and $H_r(B)$ in Lemma \ref{lemma-HOMFLYPT-module} are invariant under Markov moves.

\begin{figure}[ht]
$
\xymatrix{
\setlength{\unitlength}{1pt}
\begin{picture}(60,40)(-30,0)

\put(-10,10){\vector(1,1){30}}

\put(20,0){\line(-1,1){18}}

\put(-2,22){\line(-1,1){8}}

\qbezier(-10,10)(-20,0)(-20,20)

\qbezier(-10,30)(-20,40)(-20,20)

\multiput(-25,35)(5,0){7}{\line(1,0){2}}

\multiput(-25,5)(5,0){7}{\line(1,0){2}}

\multiput(-25,5)(0,5){6}{\line(0,1){2}}

\multiput(10,5)(0,5){6}{\line(0,1){2}}

\end{picture} \ar@<5ex>[rr] && \setlength{\unitlength}{1pt}
\begin{picture}(60,40)(-30,0)

\put(5,25){\vector(1,1){15}}

\put(20,0){\line(-1,1){15}}

\qbezier(5,15)(0,20)(5,25)

\multiput(-25,35)(5,0){7}{\line(1,0){2}}

\multiput(-25,5)(5,0){7}{\line(1,0){2}}

\multiput(-25,5)(0,5){6}{\line(0,1){2}}

\multiput(10,5)(0,5){6}{\line(0,1){2}}

\end{picture} \ar@<-4ex>[ll] \ar@<5ex>[rr] && \setlength{\unitlength}{1pt}
\begin{picture}(60,40)(-30,0)

\put(-10,10){\line(1,1){8}}

\put(2,22){\vector(1,1){18}}

\put(20,0){\line(-1,1){30}}

\qbezier(-10,10)(-20,0)(-20,20)

\qbezier(-10,30)(-20,40)(-20,20)

\multiput(-25,35)(5,0){7}{\line(1,0){2}}

\multiput(-25,5)(5,0){7}{\line(1,0){2}}

\multiput(-25,5)(0,5){6}{\line(0,1){2}}

\multiput(10,5)(0,5){6}{\line(0,1){2}}

\end{picture} \ar@<-4ex>[ll] \\
\setlength{\unitlength}{1pt}
\begin{picture}(50,60)(-25,0)

\put(-10,0){\vector(0,1){60}}

\put(10,0){\vector(0,1){60}}

\multiput(-15,50)(5,0){6}{\line(1,0){2}}

\multiput(-15,10)(5,0){6}{\line(1,0){2}}

\multiput(-15,10)(0,5){8}{\line(0,1){2}}

\multiput(15,10)(0,5){8}{\line(0,1){2}}

\end{picture} \ar@<7ex>[rr] && \setlength{\unitlength}{1pt}
\begin{picture}(50,60)(-25,0)

\put(-15,0){\line(1,1){15}}

\qbezier(0,15)(15,30)(0,45)

\put(0,45){\vector(-1,1){15}}

\put(15,0){\line(-1,1){13}}

\put(2,47){\vector(1,1){13}}

\qbezier(-2,17)(-15,30)(-2,43)

\multiput(-15,50)(5,0){6}{\line(1,0){2}}

\multiput(-15,10)(5,0){6}{\line(1,0){2}}

\multiput(-15,10)(0,5){8}{\line(0,1){2}}

\multiput(15,10)(0,5){8}{\line(0,1){2}}

\end{picture} \ar@<-6ex>[ll] && \\
\setlength{\unitlength}{1pt}
\begin{picture}(90,60)(-45,0)

\put(-30,0){\vector(1,1){60}}

\put(30,0){\line(-1,1){28}}

\put(-2,32){\line(-1,1){11}}

\put(-17,47){\vector(-1,1){13}}

\put(0,0){\line(-1,1){13}}

\put(-15,45){\vector(1,1){15}}

\qbezier(-17,17)(-30,30)(-15,45)

\multiput(-25,50)(5,0){8}{\line(1,0){2}}

\multiput(-25,10)(5,0){8}{\line(1,0){2}}

\multiput(-25,10)(0,5){8}{\line(0,1){2}}

\multiput(15,10)(0,5){8}{\line(0,1){2}}

\end{picture} \ar@<7ex>[rr] && \setlength{\unitlength}{1pt}
\begin{picture}(90,60)(-45,0)

\put(-30,0){\vector(1,1){60}}

\put(30,0){\line(-1,1){13}}

\put(13,17){\line(-1,1){11}}

\put(-2,32){\vector(-1,1){28}}

\put(0,0){\line(1,1){15}}

\put(13,47){\vector(-1,1){13}}

\qbezier(15,15)(30,30)(17,43)

\multiput(-15,50)(5,0){8}{\line(1,0){2}}

\multiput(-15,10)(5,0){8}{\line(1,0){2}}

\multiput(-15,10)(0,5){8}{\line(0,1){2}}

\multiput(25,10)(0,5){8}{\line(0,1){2}}

\end{picture} \ar@<-6ex>[ll] &&
}
$
\caption{Parts of the braid diagram changed by braid-like Reidemeister moves}\label{fig-Reidemeister-boxes}

\end{figure}
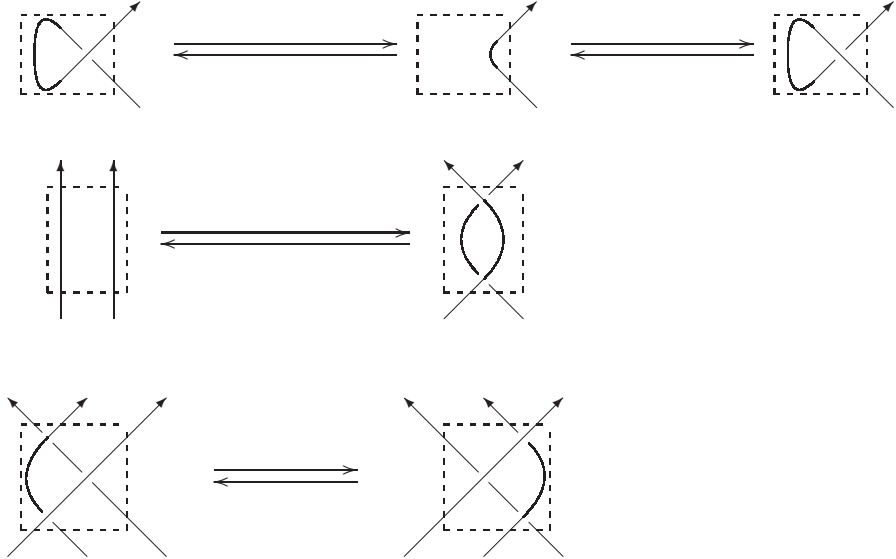

\begin{lemma}\label{lemma-HOMFLYPT-module-inv}
Assume that $B'$ is another closed braid diagram of the same link. Fix a sequence of Markov moves that changes $B$ to $B'$. Denote by $K_i'$ the component of $B'$ that is identified to $K_i$ through this sequence of Markov moves. To each $K_i'$, we assign a variable $X_i'$ of degree $2$. Set $R_{B'}:=\Q[X_1',\dots,X_m']$ and $R_{B',r}:=\Q[X_2'-X_1',\dots,X_m'-X_1']$. We identify the rings $R_B$ with $R_{B'}$ (resp. $R_{B,r}$ with $R_{B',r}$) via the equations $X_i=X_i'$ (resp. $X_i-X_1=X_i'-X_1'$.) Then this sequence of Markov moves induces:
\begin{itemize}
	\item an isomorphism $H(B)\cong H(B')$ of $\zed^3$-graded $R_B$-modules,
	\item an isomorphism $H_r(B)\cong H_r(B')$ of $\zed^3$-graded $R_{B,r}$-modules.
\end{itemize}
\end{lemma}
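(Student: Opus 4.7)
The plan is to upgrade the $\Q$-linear Markov invariance of $H(B)$ and $H_r(B)$ from \cite{KR2} to Markov invariance at the level of graded modules. I would first reduce to the case of a single Markov move (braid-like Reidemeister II, braid-like Reidemeister III, and Markov stabilization, together with their inverses), and then compose the resulting module isomorphisms along the given Markov sequence.

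The key input is that the chain homotopy equivalences $C_0(B) \simeq C_0(B')$ produced in \cite{KR2} are supported in a bounded region (the ``box'' indicated in Figure~\ref{fig-Reidemeister-boxes}). Outside this box the two diagrams agree edge-for-edge, and the chain maps and null-homotopies implementing the equivalence are constructed by tensoring explicit local maps on the box with the identity on the exterior. Consequently these maps are automatically $\Q[X_e]$-linear for every edge variable $X_e$ attached to an edge $e$ that lies outside the box.

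With that in hand, the upgrade to an $R_B$-module isomorphism goes as follows. For each component $K_i$, I would select an edge $e_i$ on $K_i$ outside the box of the current Markov move; such an edge always exists, since every component of $B$ must traverse the exterior of the local box (in the stabilization case, this uses that the newly introduced strand merges with an existing component upon closure). Writing $e_i'$ for the matching edge of $B'$ on $K_i'$, Rasmussen's observation (Lemma~\ref{lemma-HOMFLYPT-module}) lets me represent the $X_i$-action on $H(B)$ (resp.\ $H(B')$) by $X_{e_i}$ (resp.\ $X_{e_i'}$). Since the chain equivalence is $\Q[X_{e_i}]$-linear and $X_{e_i} = X_{e_i'}$ under the identification in the statement, the induced isomorphism $H(B) \cong H(B')$ intertwines the $X_i$-actions for all $i$, which gives the first bullet. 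For the second bullet, I would either rerun the argument with $C_r(B)$ in place of $C_0(B)$, or, more economically, quotient the $R_B$-isomorphism $H(B) \cong H(B')$ by $X_1$ and invoke Remark~\ref{remark-quotient}.

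The main obstacle is really one of bookkeeping rather than ideas: one must check that each of the explicit local chain maps and null-homotopies in \cite{KR2}, across all the braid-like Reidemeister and stabilization moves, is genuinely of ``local map $\otimes$ identity'' form with respect to the exterior edges. Once that verification is noted (and it is immediate from the construction in \cite{KR2}), the module-linearity is automatic, so I would present this as a single pattern observation rather than a case-by-case calculation.
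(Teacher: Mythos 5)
Your proposal is correct and follows essentially the same route as the paper: reduce to a single braid-like Reidemeister/Markov move, note that the Khovanov--Rozansky isomorphism from \cite{KR2} commutes with the variables assigned to edges lying outside the local box, represent each component's variable $X_i$ by such an exterior edge, and conclude the $R_B$-module isomorphism, with the statement for $H_r$ obtained by quotienting by $X_1$ via Remark \ref{remark-quotient}. No gaps to report.
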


\begin{proof}
We only need to prove this lemma in the case when $B$ and $B'$ differ by a single braid-like Reidemeister move. For each braid-like Reidemeister move, Khovanov and Rozansky constructed in \cite{KR2} a $\Q$-linear isomorphism of $H(B)$ and $H(B')$ preserving the $\zed^3$-grading. This isomorphism commutes with the actions of the variables assigned to edges that are \emph{not entirely with in} the part of $B$ and $B'$ changed by the braid-like Reidemeister move, that is, \emph{not entirely with in} one of the dashed boxes in Figure \ref{fig-Reidemeister-boxes}. Note that every component of $B$ and $B'$ contains an edge not entirely with in this dashed box. With out loss of generality, we choose the variables assigned to each pair of corresponding components of $B$ and $B'$ to be the variables assigned to a pair of corresponding edges on these components that are not entirely with in this dashed box. Then Khovanov and Rozansky's isomorphism commutes with the variables assigned to all components of $B$ and $B'$. Thus, this isomorphism is an isomorphism of $R_B$-modules. This proves $H(B)\cong H(B')$ as $\zed^3$-graded $R_B$-modules. $H_r(B)\cong H_r(B')$ follows from $H(B)\cong H(B')$ and Remark \ref{remark-quotient}.
\end{proof}

\begin{remark}
Note that we did not claim the naturality of the isomorphisms in Lemma \ref{lemma-HOMFLYPT-module-inv}. We do not need the naturality for our results.
\end{remark}

\section{Minimal Free Resolutions}\label{sec-minimal-res}

Betti numbers of a module are often understood through the minimal free resolution of this module. In this section, we review basics of minimal free resolutions of graded modules over a polynomial ring. For more details, see for example \cite[Chapter 1]{Eisenbud-book-2}.

Let $R=\Q[X_1,\dots,X_m]$ be a polynomial ring graded by $\deg X_j =2$ for all $j=1,\dots,m$. The maximal homogeneous ideal of $R$ is $\m=(X_1,\dots,X_m)$.

\begin{theorem}[Hilbert's Syzygy Theorem]\label{thm-syzygy}
Assume that $M$ is a finitely generated graded $R$-module. Then there is a graded free resolution 
\[
0 \rightarrow F_l \rightarrow F_{l-1} \rightarrow \cdots \rightarrow F_1 \rightarrow F_0
\]
of $M$ over $R$, in which each $F_j$ is finitely generated over $R$, each arrow preserves the module grading, and $l \leq m$.
\end{theorem}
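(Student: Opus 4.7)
The plan is to construct a minimal graded free resolution of $M$ and then bound its length using $\Tor^R_\bullet(\Q,M)$ computed via the Koszul complex.

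First, I would build the resolution inductively. Since $M$ is finitely generated and graded with every $\deg X_j > 0$, the quotient $M/\m M$ is a finite-dimensional graded $\Q$-space; lifting a homogeneous basis gives a graded surjection $F_0 \twoheadrightarrow M$ from a finitely generated graded free module. Graded Nakayama ensures the chosen generators are minimal, and Noetherianity of $R$ ensures that the kernel is again a finitely generated graded submodule of $F_0$. Iterating produces a graded free resolution $\cdots \to F_2 \to F_1 \to F_0 \to M \to 0$ in which every $F_i$ is finitely generated over $R$, every differential preserves the grading, and, by choosing generators minimally at each stage, the image of each differential $d_{i+1}\colon F_{i+1}\to F_i$ lies in $\m F_i$.

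Next, I would compute $\Tor^R_i(\Q,M)$ in two ways. Tensoring the above resolution with $\Q = R/\m$ kills every differential (by the minimality property $d(F_{i+1}) \subseteq \m F_i$), so $\Tor^R_i(\Q,M) \cong F_i \otimes_R \Q$ as graded $\Q$-spaces; in particular $F_i = 0$ if and only if $\Tor^R_i(\Q,M) = 0$. On the other hand, the Koszul complex $K_\bullet(X_1,\dots,X_m;R)$ is a graded free resolution of $\Q$ of length exactly $m$, because $X_1,\dots,X_m$ is a regular sequence in $R$. Computing $\Tor^R_\bullet(\Q,M)$ from this second resolution immediately yields $\Tor^R_i(\Q,M) = 0$ for all $i > m$. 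Combining the two, $F_i \otimes_R \Q = 0$ for $i > m$, and graded Nakayama then forces $F_i = 0$ for $i > m$. Hence the minimal resolution has length $l \leq m$, as claimed.

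The argument is classical, and I would simply refer to \cite[Chapter 1]{Eisenbud-book-2} for most of it. The essential ingredient beyond the mechanical construction of a minimal resolution is the exactness of the Koszul complex on $X_1,\dots,X_m$, which reduces to regularity of that sequence in the polynomial ring $R$. No genuine obstacle arises in the graded setting, since $\m$ is homogeneous, $M$ is graded, and all the choices (generators, syzygies, Koszul differentials) can be made compatibly with the grading.
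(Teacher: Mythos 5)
Your argument is correct and complete modulo standard facts. Note, however, that the paper does not prove Theorem \ref{thm-syzygy} at all: it simply defers to \cite[Theorem 4.3]{Arrondo-notes} for ``a detailed elementary proof,'' so there is no in-paper argument to match. Your route is the standard homological one: build a minimal graded free resolution (finite generation of each syzygy module coming from Noetherianity, minimality from choosing generators modulo $\m$), observe that minimality makes $\Tor^R_i(\Q,M)\cong F_i\otimes_R\Q$, and then kill $\Tor^R_i(\Q,M)$ for $i>m$ by resolving $\Q=R/\m$ with the Koszul complex on the regular sequence $X_1,\dots,X_m$; graded Nakayama then forces $F_i=0$ for $i>m$. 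This is exactly the treatment in \cite[Chapter 1]{Eisenbud-book-2}, and it meshes well with the rest of the paper, since the minimal resolution and the identification $\Tor^R_p(R/\m,M)\cong (R/\m)\otimes_R F_p$ are precisely what Section \ref{sec-minimal-res} uses anyway (Lemma \ref{lemma-mfs-Betti}). The cited elementary proof instead proceeds by induction on the number of variables, showing directly that a high enough syzygy module is free, and so avoids $\Tor$ and the exactness of the Koszul complex; your version is shorter but leans on that exactness (regular sequence $\Rightarrow$ Koszul resolution), which you correctly flag as the one nontrivial input. The only points worth making explicit are that graded Nakayama applies because each $F_i$ is finitely generated (equivalently, bounded below in degree), and that the symmetry $\Tor^R_i(\Q,M)\cong\Tor^R_i(M,\Q)$ (or a direct computation with $K_\bullet\otimes_R M$) is what lets you use the Koszul resolution of $\Q$ in the second variable.
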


For a detailed elementary proof of Hilbert's Syzygy Theorem, see for example \cite[Theorem 4.3]{Arrondo-notes}.

\begin{definition}\label{def-minimal-res}
A chain complex of graded $R$-modules $\cdots \rightarrow C_p \xrightarrow{d_p} C_{p-1} \rightarrow \cdots$
is called minimal if $d_P(C_p) \subset \m C_{p-1}$ for each $p$.

A graded free resolution of a graded $R$-module is called a minimal free resolution if it is also a minimal chain complex of graded $R$-modules.
\end{definition}

\begin{theorem}\cite[Theorem 1.6]{Eisenbud-book-2}\label{thm-minimal-res}
If $M$ is a finitely generated graded $R$-module, then any finitely generated graded free resolution of $M$ over $R$ contains a minimal free resolution of $M$ as a direct summand. Moreover, any two minimal free resolutions of $M$ over $R$ are isomorphic as chain complexes of graded $R$-modules via an isomorphism that induces the identity map on $M$.
\end{theorem}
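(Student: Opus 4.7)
The plan is to derive both parts of the theorem from Hilbert's Syzygy Theorem (Theorem \ref{thm-syzygy}) combined with a graded-Nakayama rigidity principle: any grading-preserving endomorphism $\varphi$ of a finitely generated graded free $R$-module $F$ whose reduction $\varphi \otimes_R R/\m$ is the identity must itself be an isomorphism. This principle is elementary in the positively graded setting, since $F$ is finite-dimensional in each fixed degree and $\m F$ contributes to degree $d$ only through the strictly lower-degree pieces of $F$, allowing an induction on $d$.

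First I would build a minimal free resolution of $M$ by the standard iterative construction. Lift a homogeneous $\Q$-basis of the finite-dimensional space $M/\m M$ to a set of minimal homogeneous generators of $M$; this produces a grading-preserving surjection $F_0 \twoheadrightarrow M$ from a finitely generated graded free module. The kernel must lie in $\m F_0$: a relation with a nonzero scalar coefficient would allow one generator to be eliminated, contradicting minimality. The kernel is itself finitely generated by the graded Hilbert basis theorem, so one iterates, and Theorem \ref{thm-syzygy} guarantees termination after at most $m$ steps, producing a minimal resolution $F_\bullet \to M$ with each $F_p$ finitely generated.

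For the direct-summand assertion, let $G_\bullet \to M$ be any finitely generated graded free resolution. Projectivity of the $F_p$ and $G_p$ yields grading-preserving chain maps $\phi: F_\bullet \to G_\bullet$ and $\psi: G_\bullet \to F_\bullet$ lifting $\id_M$, so that $\psi \circ \phi: F_\bullet \to F_\bullet$ also lifts $\id_M$. Because the differentials of the minimal complex $F_\bullet$ all land in $\m F_\bullet$, the tensored complex $F_\bullet \otimes_R R/\m$ has zero differentials, and consequently $F_p \otimes_R R/\m = \Tor_p^R(R/\m, M)$ as $\zed$-graded $\Q$-spaces. Thus $\psi \circ \phi$ induces the identity on each $F_p / \m F_p$, and the rigidity principle forces $\psi \circ \phi$ to be an isomorphism on every $F_p$. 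Therefore $\phi$ is split injective and exhibits $F_\bullet$ as a graded direct summand of $G_\bullet$.

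Uniqueness follows by running the same mechanism between two minimal resolutions $F_\bullet$ and $F'_\bullet$: lift $\id_M$ to $\phi: F_\bullet \to F'_\bullet$ and $\psi: F'_\bullet \to F_\bullet$; minimality of \emph{both} complexes makes both $\psi\circ\phi$ and $\phi\circ\psi$ reduce to the identity modulo $\m$ on every level, hence level-wise isomorphisms, so $\phi$ is itself a chain isomorphism inducing $\id_M$. The main technical obstacle is the careful statement of the rigidity principle in the graded setting when $F$ has generators in several distinct degrees; the positivity of the generators of $\m$ together with the finite-dimensionality of each graded piece reduces this to a routine induction on degree, but it is the sole step where the special features of the positively graded polynomial setting enter in an essential way.
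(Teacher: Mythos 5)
Your proposal is correct, but it follows a genuinely different route from the paper. The paper treats this as a quoted textbook result and only sketches arguments: for the existence/direct-summand part it uses the elementary ``Gaussian elimination'' argument --- represent the differentials of a given finitely generated graded free resolution by matrices of homogeneous entries, and whenever a nonzero scalar entry appears, change bases to split off a trivial summand $0\rightarrow R\xrightarrow{c}R\rightarrow 0$, iterating until all entries lie in $\m$; the uniqueness part is simply deferred to \cite[Theorem 20.2]{Eisenbud-book-1} with the remark that the local-ring proof adapts to the graded setting (and the paper notes it never uses uniqueness). You instead build the minimal resolution directly from minimal homogeneous generators and then prove both the direct-summand statement and uniqueness by one mechanism: lift $\id_M$ both ways, observe that minimality makes the reduced complex $F_\bullet\otimes_R R/\m$ have zero differentials so any lift of $\id_M$ reduces to the identity modulo $\m$, and invoke the graded Nakayama/rigidity principle (a degree-preserving endomorphism of a finitely generated graded free module that is the identity modulo $\m$ is triangular with identity diagonal blocks in a homogeneous basis ordered by degree, hence invertible). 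Your approach buys uniformity and self-containedness --- it is essentially the standard local-ring proof transported to the graded category, and it delivers the uniqueness statement the paper outsources --- while the paper's cancellation argument is more elementary and exhibits the minimal summand concretely inside the given resolution without homotopy or comparison machinery. One small imprecision: Theorem \ref{thm-syzygy} only asserts the existence of \emph{some} resolution of length at most $m$, so by itself it does not make your iterative construction stop after $m$ steps; either note that the $m$-th syzygy of the minimal construction is graded projective, hence graded free, or simply observe that your own split injection of $F_\bullet$ into a finite-length resolution forces $F_p=0$ for large $p$, which repairs the point within your argument.
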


Clearly, Theorem \ref{thm-syzygy} and the first half of Theorem \ref{thm-minimal-res} guarantee the existence of the minimal free resolution of any graded $R$-module. The second half of Theorem \ref{thm-minimal-res} gives the uniqueness of the the minimal free resolution. 

The proof of the existence part of Theorem \ref{thm-minimal-res} is quite elementary. Say, $0 \rightarrow F_l \rightarrow F_{l-1} \rightarrow \cdots \rightarrow F_1 \rightarrow F_0$ is a finitely generated graded free resolution of $M$ over $R$. Fix a homogeneous $R$-basis for each $F_p$. Then each map $F_p \rightarrow F_{p-1}$ is given by a matrix whose entries are all homogeneous elements of $R$. Clearly, this resolution is minimal if and only if, for every $1\leq p \leq l$, all entries of this matrix are in $\m$. If this is not true, then, for some $p$, the matrix contains non-zero scalar $c$. Using this entry $c$, one can perform a change of bases for $F_p$ and $F_{p-1}$ to show that the original resolution has a direct summand of the form $0\rightarrow R \xrightarrow{c} R \rightarrow 0$. Removing this direct summand, we get a new ``smaller" graded free resolution of $M$. Repeat this process till there are no more non-zero scalars in the matrices representing the boundary maps. Then we get a minimal free resolution of $M$ that is a direct summand of the original graded free resolution of $M$. 

The proof of the uniqueness part of Theorem \ref{thm-minimal-res} requires some basic knowledge of homological algebra. It can be found in for example \cite[Theorem 20.2]{Eisenbud-book-1}. Note that the proof in \cite{Eisenbud-book-1} is for modules over local rings. But, with minor modifications, this proof also works for graded modules over polynomial rings. We do not actually use the uniqueness of the minimal free resolution in this paper.

\begin{definition}\label{def-Betti-general}
For a finitely generated graded $R$-module $M$, its $(p,q)^{th}$ Betti number is $\beta_M(p,q) := \dim_\Q \Tor^R_p(R/\m,M)^q$, where $\Tor^R_p(R/\m,M)^q$ is the homogeneous component of $\Tor^R_p(R/\m,M)$ of degree $q$. 
\end{definition}

The following lemma describes the relations between the minimal free resolution, Betti numbers and the projective dimension.

\begin{lemma}\cite[Proposition 1.7]{Eisenbud-book-2}\label{lemma-mfs-Betti}
Let $M$ be a finitely generated graded $R$-module, and $0 \rightarrow F_l \rightarrow F_{l-1} \rightarrow \cdots \rightarrow F_1 \rightarrow F_0$ a minimal free resolution of $M$ over $R$. Then, for every $p\geq 0$, as graded $\Q$-spaces,
\begin{equation}\label{eq-iso-Tor}
(R/\m)\otimes_R F_p \cong \Tor^R_p(R/\m,M).
\end{equation} 

Consequently,
\begin{itemize}
	\item any homogeneous $R$-basis for $F_p$ contains exactly $\beta_M(p,q)$ elements of degree $q$,
	\item the projective dimension of $M$ over $R$ is $\pd_R M = \max\{p~|~\beta_M(p,q)\neq 0 \text{ for some } q \in \zed.\}$
\end{itemize}
\end{lemma}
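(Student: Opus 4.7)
The plan is to compute $\Tor^R_\bullet(R/\m, M)$ directly from the given minimal free resolution, using the defining property of minimality to kill all differentials after tensoring. This is the standard argument that makes minimal resolutions useful.

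First, I would recall that $\Tor^R_\bullet(R/\m, M)$ can be computed from any free (or flat) resolution of $M$. Take the minimal free resolution $0 \to F_l \to F_{l-1} \to \cdots \to F_1 \to F_0$, delete $M$, and apply $(R/\m) \otimes_R -$ to obtain the complex
\[
0 \to (R/\m)\otimes_R F_l \to \cdots \to (R/\m)\otimes_R F_1 \to (R/\m)\otimes_R F_0 \to 0,
\]
whose $p^{th}$ homology is $\Tor^R_p(R/\m, M)$ by definition. The key observation is that each boundary map $d_p\colon F_p \to F_{p-1}$ is represented, in a chosen pair of homogeneous $R$-bases, by a matrix whose entries lie in $\m$ (this is exactly the minimality hypothesis from Definition \ref{def-minimal-res}). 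After applying $(R/\m)\otimes_R -$, every such entry becomes zero, so the induced differential $\id_{R/\m} \otimes d_p$ is the zero map. Consequently the complex has zero differentials, and
\[
\Tor^R_p(R/\m, M) \cong (R/\m)\otimes_R F_p
\]
as graded $\Q$-spaces, which is the first claim.

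For the first consequence, I would write $F_p \cong \bigoplus_{i} R(-n_{p,i})$ for a chosen homogeneous $R$-basis, where $n_{p,i}$ is the degree of the $i^{th}$ basis element. Tensoring with $R/\m$ gives $(R/\m)\otimes_R F_p \cong \bigoplus_i (R/\m)(-n_{p,i})$; since each summand contributes a one-dimensional $\Q$-space in its own degree, the dimension of the degree-$q$ piece equals the number of basis elements of degree $q$. Combined with the isomorphism above and Definition \ref{def-Betti-general}, this number is exactly $\beta_M(p,q)$.

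For the projective dimension statement, the minimal free resolution has length $\leq l$, and clearly $\pd_R M \leq l$. Conversely, if $\beta_M(p,q) \neq 0$ for some $q$, then by the just-established isomorphism $F_p \neq 0$ in the minimal resolution, which forces any free resolution of $M$ to have length at least $p$ (otherwise, one could truncate the minimal resolution and obtain a contradiction with the uniqueness part of Theorem \ref{thm-minimal-res}, or more directly: $\Tor^R_p(R/\m, M) \neq 0$ implies $\pd_R M \geq p$). The only slightly delicate step is this lower bound for the projective dimension, but it follows immediately from the fact that $\Tor^R_p$ vanishes above the projective dimension for every module, applied to the module $R/\m$. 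This yields $\pd_R M = \max\{p \mid \beta_M(p,q) \neq 0 \text{ for some } q\}$. I do not expect any serious obstacle here; the entire argument is a direct unwinding of definitions once minimality is exploited, and the main subtlety is simply being careful that the isomorphism respects the $\zed$-grading, which it does because the differentials in the minimal resolution are homogeneous.
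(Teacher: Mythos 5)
Your proposal is correct and follows essentially the same route as the paper: tensoring the minimal resolution with $R/\m$ kills all differentials by minimality, giving isomorphism \eqref{eq-iso-Tor}, from which the Betti-number count of basis elements and the sandwich $\max\{p \mid \Tor^R_p(R/\m,M)\neq 0\} \leq \pd_R M \leq \max\{p \mid F_p \neq 0\}$ yield both consequences. The only cosmetic point is that for the upper bound one should truncate the resolution at its last nonzero term, i.e.\ use $\pd_R M \leq \max\{p \mid F_p \neq 0\}$ rather than merely $\pd_R M \leq l$, exactly as the paper does.
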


\begin{proof}
Recall that $\Tor^R_p(R/\m,M)$ is the $p^{th}$ homology of the chain complex 
\begin{equation}\label{eq-minimal-0-arrows}
0 \rightarrow (R/\m)\otimes_R F_l \rightarrow (R/\m)\otimes_R F_{l-1} \rightarrow \cdots \rightarrow (R/\m)\otimes_R F_1 \rightarrow (R/\m)\otimes_R F_0\rightarrow 0.
\end{equation}
The free resolution of $M$ being minimal implies that all arrows in the chain complex \eqref{eq-minimal-0-arrows} are zero maps. This proves isomorphism \eqref{eq-iso-Tor}.

The number of elements of degree $q$ in any homogeneous $R$-basis of $F_p$ is equal to the dimension over $\Q$ of the homogeneous component of $(R/\m)\otimes_R F_p$ of degree $q$, which, according to isomorphism \eqref{eq-iso-Tor}, is equal to $\dim_\Q \Tor^R_p(R/\m,M)^q = \beta_M(p,q)$.

The projective dimension of $M$ satisfies the inequality
\[
\max\{p~|~\Tor^R_p(R/\m,M)\neq 0\} \leq \pd_R M \leq \max\{p~|~F_p\neq 0.\}
\]
But, by isomorphism \eqref{eq-iso-Tor}, 
\[
\max\{p~|~F_p\neq 0\} = \max\{p~|~\Tor^R_p(R/\m,M)\neq 0\} = \max\{p~|~\beta_M(p,q)\neq 0 \text{ for some } q \in \zed.\}
\]
So $\pd_R M = \max\{p~|~\beta_M(p,q)\neq 0 \text{ for some } q \in \zed.\}$
\end{proof}

The following lemma explains how to recover the graded dimension of a graded $R$-module using its Betti numbers.

\begin{lemma}\label{lemma-Poincare-polynomial-general}
Let $M$ be a finitely generated graded $R$-module, and $0 \rightarrow F_l \rightarrow F_{l-1} \rightarrow \cdots \rightarrow F_1 \rightarrow F_0$ a minimal free resolution of $M$ over $R$. Denote by $M^i$ the homogeneous component of $M$ of degree $i$, and by $F_p^i$ the homogeneous component of $F_p$ of degree $i$. Then, for $p\geq 0$, 
\begin{equation}\label{eq-gdim-Fp}
\sum_{i \in \zed} y^i \cdot \dim_\Q F_p^i = \sum_{q\in \zed}\beta_M(p,q)\cdot (\sum_{i \in \zed} y^{2i+q} \bn{i+m-1}{i}).
\end{equation}
Consequently, 
\begin{equation}\label{eq-gdim-M}
\sum_{i \in \zed} y^i \cdot \dim_\Q M^i = \sum_{(p,q)\in \zed_{\geq0}\times\zed}(-1)^p\cdot \beta_M(p,q)\cdot (\sum_{i \in \zed} y^{2i+q} \bn{i+m-1}{i}).
\end{equation}
\end{lemma}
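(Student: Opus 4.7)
The plan is to prove \eqref{eq-gdim-Fp} by unpacking the structure of $F_p$ given by the preceding lemma, and then to derive \eqref{eq-gdim-M} from the exactness of the minimal free resolution via a standard Euler-characteristic argument applied degree by degree.

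First I would compute the graded Poincar\'e series of $R$ itself. Since $\deg X_j = 2$, a monomial $X_1^{a_1}\cdots X_m^{a_m}$ has degree $2(a_1+\cdots+a_m)$, so the number of monomials of degree $2i$ equals the number of non-negative solutions to $a_1+\cdots+a_m=i$, namely $\bn{i+m-1}{i}$, while $R$ has no odd-degree elements. Thus $\sum_{i\in\zed} y^i \dim_\Q R^i = \sum_{i\in\zed} y^{2i}\bn{i+m-1}{i}$. For a rank-one free module $R\{q\}$ whose generator is homogeneous of degree $q$, the graded dimension is multiplied by $y^q$, giving $\sum_{i\in\zed} y^{2i+q}\bn{i+m-1}{i}$. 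By Lemma \ref{lemma-mfs-Betti}, $F_p$ admits a homogeneous $R$-basis containing exactly $\beta_M(p,q)$ elements of degree $q$, so $F_p \cong \bigoplus_{q\in\zed} R\{q\}^{\beta_M(p,q)}$ as graded $R$-modules. Summing over $q$ yields \eqref{eq-gdim-Fp}.

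Next I would pass from the $F_p$'s to $M$ using the exact sequence
\[
0 \to F_l \to F_{l-1} \to \cdots \to F_1 \to F_0 \to M \to 0.
\]
Because every map preserves the module grading, for each fixed $i\in\zed$ the restriction
\[
0 \to F_l^i \to F_{l-1}^i \to \cdots \to F_0^i \to M^i \to 0
\]
is an exact sequence of finite-dimensional $\Q$-vector spaces (finite-dimensionality of each graded piece follows from finite generation of the $F_p$ and of $M$). The alternating sum of dimensions in an exact sequence vanishes, so $\dim_\Q M^i = \sum_{p\geq 0} (-1)^p \dim_\Q F_p^i$. Multiplying by $y^i$, summing over $i$, and substituting \eqref{eq-gdim-Fp} gives \eqref{eq-gdim-M}.

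The main obstacle, if any, is purely bookkeeping: verifying that each graded component of each $F_p$ and of $M$ is finite-dimensional, which is immediate since the minimal free resolution is finitely generated in each homological degree with generator degrees bounded below; and confirming that the grading shift convention matches the form $\sum_i y^{2i+q}\bn{i+m-1}{i}$ appearing in the statement. There is no genuine conceptual difficulty beyond applying Lemma \ref{lemma-mfs-Betti} and the Euler characteristic of a bounded exact complex of finite-dimensional graded pieces.
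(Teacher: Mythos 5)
Your proof is correct and follows essentially the same route as the paper: identify $F_p \cong \bigoplus_q R\{q\}^{\oplus\beta_M(p,q)}$ via Lemma \ref{lemma-mfs-Betti}, compute the graded dimension of $R\{q\}$ by counting monomials in degree-$2$ variables, and then take the degreewise alternating sum over the augmented exact resolution. The only difference is that you spell out the Euler-characteristic step that the paper states in one line, which is harmless.
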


\begin{proof}
By Lemma \ref{lemma-mfs-Betti}, $F_p \cong \bigoplus_{q\in \zed} R\{q\}^{\oplus \beta_M(p,q)},$ where $R\{q\}$ is $R$ with grading raised by $q$. That is, the scalar $1$ in $R\{q\}$ has grading $q$. Note that $R\{q\}$ has graded dimension $\sum_{i \in \zed} y^{2i+q} \bn{i+m-1}{i}$. (Here, recall that each $X_j$ is of degree $2$.) This implies equation \eqref{eq-gdim-Fp}. But the graded dimension of $M$ is the alternating sum of the graded dimensions of $F_p$'s. Thus, we have equation \eqref{eq-gdim-M}.
\end{proof}

\section{Betti Numbers}\label{sec-Betti}

In this section, we prove Lemmas \ref{lemma-Betti-inv}, \ref{lemma-Poincare-polynomials}, \ref{lemma-pd} and Theorem \ref{thm-Betti}.

\begin{proof}[Proof of Lemmas \ref{lemma-Betti-inv}, \ref{lemma-Poincare-polynomials} and \ref{lemma-pd}]
For Lemma \ref{lemma-Betti-inv}, the invariance of the Betti numbers follows from Lemma \ref{lemma-HOMFLYPT-module-inv}. Since $H(B)$ (resp. $H_r(B)$) is finitely generated over $R_B$ (resp. $R_{B,r}$,) $\beta_B(p,q,j,k)$ (resp. $\beta_{B,r}(p,q,j,k)$) is non-zero for only finitely many $(p,q,j,k) \in \zed_{\geq0}\times\zed^3$.

For Lemma \ref{lemma-Poincare-polynomials}, polynomials $\mathcal{P}_B(x, y, a, b)$ and $\mathcal{P}_{B,r}(x, y, a, b)$ are invariant under Markov moves because their coefficients are invariant under Markov moves. Equations in this lemma follows from Lemma \ref{lemma-Poincare-polynomial-general}.

Lemma \ref{lemma-pd} follows from Lemma \ref{lemma-mfs-Betti}.
\end{proof}

It remains to prove Theorem \ref{thm-Betti}. To do this, we use the following graded version of \cite[Theorem 10.59]{Rotman-book}, which is a special case of the Grothendieck Spectral Sequence \cite[Theorem 10.48]{Rotman-book}. 

\begin{theorem}\label{thm-SS-base-ring}
Assume that:
\begin{itemize}
	\item $R$ and $S$ are graded Noetherian $\Q$-algebras;
	\item $A$ is a finitely generated graded right $R$-module;
	\item 
	\begin{itemize}
		\item $B$ is a left $R$-module and a right $S$-module,
		\item $B$ has a grading that makes it a graded left $R$-module and a graded right $S$-module;
	\end{itemize} 
	\item $\Tor^R_i(A,B\otimes_S P) \cong 0$ for all $i\geq 1$ whenever $P$ is a projective left $S$-module.
\end{itemize}
Then, for every finitely generated graded left $S$-module $C$, there is a first quadrant spectral sequence $\{E^r_{p,q}\}$ of graded $\Q$-spaces with $E^2_{p,q} \cong \Tor^R_p(A, \Tor^S_q(B,C))$ that converges to $\Tor^S_{\ast}(A\otimes_R B, C)$.
\end{theorem}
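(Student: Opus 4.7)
The plan is to mirror the ungraded proof of the Grothendieck spectral sequence for composed Tor functors (as in \cite[Theorems 10.48 and 10.59]{Rotman-book}), carrying out every construction inside the category of graded modules and checking that all maps and isomorphisms preserve the grading. The ingredients needed are the existence of graded projective resolutions of finitely generated graded modules over a graded Noetherian algebra, a graded version of the Cartan--Eilenberg resolution of a complex, and the fact that the two filtration spectral sequences of a first-quadrant graded double complex inherit the grading at every page.

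First I would pick a resolution $P_\bullet \to C$ of $C$ by finitely generated graded free left $S$-modules; existence is guaranteed by the graded Noetherian hypothesis on $S$ and the finite generation of $C$, exactly as in Theorem \ref{thm-syzygy}. Applying $B \otimes_S -$ yields a complex $B \otimes_S P_\bullet$ of graded left $R$-modules, since $B$ is a graded $(R,S)$-bimodule. Next I would build a graded Cartan--Eilenberg projective resolution $Q_{\bullet,\bullet} \to B \otimes_S P_\bullet$ in the category of graded left $R$-modules, by resolving boundaries, cycles, and homology of each column separately and then assembling them into a first-quadrant double complex of graded projective $R$-modules with grading-preserving differentials. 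Applying $A \otimes_R -$ produces a first-quadrant double complex of graded $\Q$-spaces whose two filtration spectral sequences both converge, as spectral sequences of graded $\Q$-spaces, to the homology of the associated total complex.

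The acyclicity hypothesis $\Tor^R_i(A, B \otimes_S P) \cong 0$ for $i \geq 1$ and $P$ a projective left $S$-module forces one of these spectral sequences to collapse at $E^2$, identifying the total homology with the homology of $(A \otimes_R B) \otimes_S P_\bullet$, which is $\Tor^S_n(A \otimes_R B, C)$. By the defining property of the Cartan--Eilenberg resolution, the other spectral sequence has $E^2_{p,q} \cong \Tor^R_p(A, \Tor^S_q(B, C))$ and converges to the same graded limit, which is exactly the conclusion of the theorem. The main obstacle is purely bookkeeping: verifying that a graded Cartan--Eilenberg resolution actually exists and that the standard associativity isomorphism $A \otimes_R (B \otimes_S P) \cong (A \otimes_R B) \otimes_S P$ is grading-preserving. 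Both facts reduce to the observation that graded projective covers of graded submodules can be constructed in the usual way and that tensor products of graded bimodules inherit a natural grading, both of which are routine in the graded Noetherian setting.
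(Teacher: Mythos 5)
Your plan is sound and would prove the theorem, but it follows a genuinely different route from the paper. You run the classical Grothendieck machinery: resolve $C$ by graded projectives over $S$, form the complex $B\otimes_S P_\bullet$ of graded left $R$-modules, take a graded Cartan--Eilenberg resolution of that complex over $R$, apply $A\otimes_R-$, and play the two filtration spectral sequences against each other, using the acyclicity hypothesis to collapse one of them and the defining properties of the Cartan--Eilenberg resolution to identify the $E^2$-page of the other as $\Tor^R_p(A,\Tor^S_q(B,C))$. The paper instead avoids Cartan--Eilenberg resolutions entirely: it takes an ordinary graded projective resolution $Q_\bullet$ of $A$ over $R$ and $P_\bullet$ of $C$ over $S$, forms the single double complex $Q_p\otimes_R B\otimes_S P_q$, and compares its row- and column-filtration spectral sequences; the row filtration collapses by the same acyclicity hypothesis (plus associativity of the tensor product), while the column filtration is identified using only the elementary fact that a flat (in particular projective) module commutes with homology (Lemma \ref{lemma-universal-projective}). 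Your approach buys generality and fits the standard composite-functor framework of \cite{Rotman-book}, but it requires constructing and verifying a graded Cartan--Eilenberg resolution, including the exactness and splitting properties of the cycle/boundary/homology subresolutions in the graded category; the paper's two-resolution double complex is more self-contained and needs only the flat-base-change lemma, which is why it is the proof given. Two small points to tighten if you carry your plan out: what you need is merely that the graded category of $R$-modules has enough projectives (every graded module is a quotient of a graded free module), not graded ``projective covers,'' and note that $B\otimes_S P_q$ need not be finitely generated over $R$, so you should not insist on finitely generated resolutions at that stage; neither issue is an obstruction, since infinitely generated graded free modules serve perfectly well.
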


For the convenience of the reader, we include a proof of Theorem \ref{thm-SS-base-ring}. For this purpose, we need the following well known lemma.

\begin{lemma}\label{lemma-universal-projective}
Let $R$ be a graded $\Q$-algebra, and $Q$ a graded flat right $R$-module. Given any chain complex $(C_\ast,d)$ of graded left $R$-modules, we have $Q \otimes_R H_n(C_\ast) \cong H_n(Q \otimes_R C_\ast)$ as graded $\Q$-spaces for each $n$, where the isomorphism is the $\Q$-linear map given by $x\otimes[c] \mapsto [x\otimes c]$ for $x \in Q$ and $c \in \ker d_{n}$.
\end{lemma}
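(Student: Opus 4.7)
The plan is to reduce the claim to the standard fact that exact functors commute with homology. Let $Z_n := \ker d_n$ and $B_n := \mathrm{im}\, d_{n+1}$; both are graded $R$-submodules of $C_n$ since $d$ is homogeneous. The $n^{th}$ homology then sits in the two short exact sequences of graded left $R$-modules
\begin{align*}
0 \to Z_n \to C_n \to B_{n-1} \to 0, \\
0 \to B_n \to Z_n \to H_n(C_\ast) \to 0,
\end{align*}
where the first map in the top sequence is the inclusion and the second is induced by $d_n$.

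Next, apply $Q \otimes_R -$ to both sequences. Because $Q$ is graded flat, the resulting sequences remain short exact sequences of graded $\Q$-spaces. From the first, the kernel of $\mathrm{id}_Q \otimes d_n$ is identified with $Q \otimes_R Z_n$. Factoring $d_{n+1}\colon C_{n+1} \twoheadrightarrow B_n \hookrightarrow C_n$ and tensoring, flatness keeps $Q \otimes_R B_n \hookrightarrow Q \otimes_R C_n$ injective, so the image of $\mathrm{id}_Q \otimes d_{n+1}$ is identified with $Q \otimes_R B_n$. Therefore
\[
H_n(Q \otimes_R C_\ast) \;\cong\; (Q \otimes_R Z_n) / (Q \otimes_R B_n),
\]
and the tensored form of the second short exact sequence identifies this quotient with $Q \otimes_R H_n(C_\ast)$.

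Chasing the definitions, the resulting composite isomorphism sends $x \otimes [c]$ to $[x \otimes c]$ for $c \in Z_n$ and $x \in Q$, which is precisely the map in the statement. All maps involved are homogeneous of degree zero on the gradings inherited by the tensor products, so this is an isomorphism of graded $\Q$-spaces. I do not anticipate a genuine obstacle here: the only item to confirm is that graded flatness of $Q$ preserves exactness of short exact sequences of graded modules with homogeneous morphisms, which is essentially the definition of graded flatness.
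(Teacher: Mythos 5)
Your proof is correct and takes essentially the same approach as the paper: both tensor the short exact sequences $0\to \ker d_n\to C_n\to \im d_n\to 0$ and $0\to \im d_{n+1}\to \ker d_n\to H_n(C_\ast)\to 0$ with $Q$ and invoke graded flatness, arriving at the identification of both sides with $(Q\otimes_R \ker d_n)/(Q\otimes_R \im d_{n+1})$ and checking that the composite is $x\otimes[c]\mapsto[x\otimes c]$. The only cosmetic difference is that you identify $\ker(\id_Q\otimes d_n)$ and $\im(\id_Q\otimes d_{n+1})$ directly via the factorization of $d$ through its image, whereas the paper obtains the same identification from the long exact sequence of the tensored complexes together with a computation of its connecting homomorphism.
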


\begin{proof}
First consider the short exact sequence $0 \rightarrow \im d_{n+1} \xrightarrow{\jmath_{n}} \ker d_n \xrightarrow{\pi_n} H_n(C\ast) \rightarrow 0$ of graded left $R$-modules, where $\pi_n$ is the standard quotient map, and $\jmath_{n}$ is the standard inclusion. Since $Q$ is flat, we get a short exact sequence
\[
0 \rightarrow Q \otimes_R \im d_{n+1} \xrightarrow{\id_Q\otimes \jmath_{n}} Q\otimes_R\ker d_n \xrightarrow{\id_Q\otimes\pi_n} Q \otimes_R H_n(C\ast) \rightarrow 0
\]
of graded $\Q$-spaces. So we have $(Q\otimes_R\ker d_n)/(\id_Q\otimes \jmath_{n})(Q \otimes_R \im d_{n+1}) \cong Q \otimes_R H_n(C\ast)$ as graded $\Q$-spaces, where the isomorphism is given by $x \otimes c + (\id_Q\otimes \jmath_{n})(Q \otimes_R \im d_{n+1}) \mapsto x\otimes [c]$ for $x \in Q$ and $c \in \ker d_{n}$.

Now consider the short exact sequence $0 \rightarrow \ker d_n \xrightarrow{\iota_n} C_n \xrightarrow{d_n} \im d_n \rightarrow 0$ of graded left $R$-modules, where $\iota_n$ is the standard inclusion. Since $Q$ is flat, this gives us a short exact sequence
\[
0 \rightarrow Q\otimes_R\ker d_n \xrightarrow{\id_Q\otimes\iota_n} Q\otimes_R C_n \xrightarrow{\id_Q\otimes d_n} Q\otimes_R \im d_n \rightarrow 0
\]
of graded $\Q$-spaces, which induces a long exact sequence 
\[
\cdots \rightarrow Q \otimes_R \im d_{n+1} \xrightarrow{\Delta_{n+1}} Q\otimes_R\ker d_n \xrightarrow{(\id_Q\otimes\iota)_\ast} H_n(Q\otimes_R C_\ast)  \rightarrow Q \otimes_R \im d_{n} \xrightarrow{\Delta_{n}} \cdots
\]
of graded $\Q$-spaces. A simple diagram chase gives that the connecting homomorphism is $\Delta_n=\id_Q \otimes \jmath_{n-1}$, which, as shown above, is injective. Thus, we get a short exact sequence 
\[
0 \rightarrow Q \otimes_R \im d_{n+1} \xrightarrow{\id_Q\otimes \jmath_{n}} Q\otimes_R\ker d_n \xrightarrow{(\id_Q\otimes\iota)_\ast} H_n(Q\otimes_R C_\ast)  \rightarrow 0
\] 
of graded $\Q$-spaces. So we have $(Q\otimes_R\ker d_n)/(\id_Q\otimes \jmath_{n})(Q \otimes_R \im d_{n+1}) \cong H_n(Q\otimes_R C_\ast)$ as graded $\Q$-spaces, where the isomorphism is given by $x \otimes c + (\id_Q\otimes \jmath_{n})(Q \otimes_R \im d_{n+1}) \mapsto [x\otimes c]$ for $x \in Q$ and $c \in \ker d_{n}$.

Thus, $Q \otimes_R H_n(C_\ast) \cong H_n(Q \otimes_R C_\ast)$ as graded $\Q$-spaces, where the isomorphism is the $\Q$-linear map given by $x\otimes[c] \mapsto [x\otimes c]$ for $x \in Q$ and $c \in \ker d_{n}$.
\end{proof}

Now we are ready to prove Theorem \ref{thm-SS-base-ring}.

\begin{proof}[Proof of Theorem \ref{thm-SS-base-ring}]
Let $\cdots\rightarrow Q_1 \rightarrow Q_0$ be a graded projective resolution of the right $R$-module $A$, and $\cdots\rightarrow P_1 \rightarrow P_0$ be a graded projective resolution of the left $S$-module $C$. Consider the first quadrant double complex
\begin{equation}\label{eq-double-complex}
\xymatrix{
\cdots \ar[d]& \cdots \ar[d]  & \cdots \ar[d]  &   \\
Q_0 \otimes_R B \otimes_S P_2 \ar[d] & Q_1 \otimes_R B \otimes_S P_2 \ar[d] \ar[l] & Q_2 \otimes_R B \otimes_S P_2 \ar[d]\ar[l]  & \cdots \ar[l] \\
Q_0 \otimes_R B \otimes_S P_1 \ar[d] & Q_1 \otimes_R B \otimes_S P_1 \ar[d] \ar[l] & Q_2 \otimes_R B \otimes_S P_1 \ar[d]  \ar[l]& \cdots \ar[l] \\
Q_0 \otimes_R B \otimes_S P_0  & Q_1 \otimes_R B \otimes_S P_0 \ar[l] & Q_2 \otimes_R B \otimes_S P_0 \ar[l]  & \cdots \ar[l]
}
\end{equation}
Denote by $\{\hat{E}^r\}$ the spectral sequence of double complex \eqref{eq-double-complex} induced by its row filtration and by $\{E^r\}$ the spectral sequence of double complex \eqref{eq-double-complex} induced by its column filtration. Both of these are spectral sequences of graded $\Q$-spaces converging to the homology of the total complex of double complex \eqref{eq-double-complex}. 

First we consider the spectral sequence $\{\hat{E}^r\}$. Note that
\[
\hat{E}^0=\xymatrix{
\cdots & \cdots   & \cdots   &   \\
Q_0 \otimes_R (B \otimes_S P_2)  & Q_1 \otimes_R (B \otimes_S P_2) \ar[l] & Q_2 \otimes_R (B \otimes_S P_2) \ar[l]  & \cdots \ar[l] \\
Q_0 \otimes_R (B \otimes_S P_1)  & Q_1 \otimes_R (B \otimes_S P_1) \ar[l] & Q_2 \otimes_R (B \otimes_S P_1) \ar[l]& \cdots \ar[l] \\
Q_0 \otimes_R (B \otimes_S P_0)  & Q_1 \otimes_R (B \otimes_S P_0) \ar[l] & Q_2 \otimes_R (B \otimes_S P_0) \ar[l]  & \cdots \ar[l]
}
\]
So
\[
\hat{E}^1=\xymatrix{
\cdots \ar[d]& \cdots  \ar[d] & \cdots  \ar[d] &   \\
A \otimes_R (B \otimes_S P_2) \ar[d] & \Tor^R_1(A, B \otimes_S P_2) \ar[d] & \Tor^R_2(A, B \otimes_S P_2) \ar[d]  & \cdots  \\
A \otimes_R (B \otimes_S P_1)  \ar[d]& \Tor^R_1(A, B \otimes_S P_1) \ar[d] & \Tor^R_2(A, B \otimes_S P_1)  \ar[d]& \cdots  \\
A \otimes_R (B \otimes_S P_0)  & \Tor^R_1(A, B \otimes_S P_0)  & \Tor^R_2(A, B \otimes_S P_0)   & \cdots 
}
\]
By assumption, all but the left most column in $\hat{E}_1$ vanish. Also note that $A \otimes_R (B \otimes_S P_j) \cong (A \otimes_R B) \otimes_S P_j$. So
\[
\hat{E}^2_{p,q} \cong \begin{cases}
\Tor^S_q (A \otimes_R B, C) & \text{if } p=0,\\
0 & \text{if } p>0.
\end{cases}
\]
Thus, $\{\hat{E}^r\}$ collapses at its $E^2$-page. This implies that, as graded $\Q$-space, the $n^{th}$ homology of the total complex of double complex \eqref{eq-double-complex} is isomorphic to $\Tor^S_n (A \otimes_R B, C)$.

Now consider the spectral sequence $\{E^r\}$.  Note that
\[
E^0=\xymatrix{
\cdots \ar[d]& \cdots   \ar[d]& \cdots   \ar[d]&   \\
Q_0 \otimes_R (B \otimes_S P_2)  \ar[d]& Q_1 \otimes_R (B \otimes_S P_2) \ar[d]& Q_2 \otimes_R (B \otimes_S P_2) \ar[d]& \cdots  \\
Q_0 \otimes_R (B \otimes_S P_1)  \ar[d]& Q_1 \otimes_R (B \otimes_S P_1) \ar[d]& Q_2 \otimes_R (B \otimes_S P_1) \ar[d]& \cdots  \\
Q_0 \otimes_R (B \otimes_S P_0)  & Q_1 \otimes_R (B \otimes_S P_0) & Q_2 \otimes_R (B \otimes_S P_0) & \cdots 
}
\]
Recall that projective module are flat. By Corollary \ref{lemma-universal-projective}, 
\[
E^1=\xymatrix{
\cdots & \cdots   & \cdots   &   \\
Q_0 \otimes_R \Tor^S_2(B,C)  & Q_1 \otimes_R \Tor^S_2(B,C) \ar[l]& Q_2 \otimes_R \Tor^S_2(B,C) \ar[l]& \cdots \ar[l] \\
Q_0 \otimes_R \Tor^S_1(B,C)  & Q_1 \otimes_R \Tor^S_1(B,C) \ar[l]& Q_2 \otimes_R \Tor^S_1(B,C) \ar[l]& \cdots \ar[l] \\
Q_0 \otimes_R (B \otimes_S C)  & Q_1 \otimes_R (B \otimes_S C) \ar[l]& Q_2 \otimes_R (B \otimes_S C) \ar[l]& \cdots \ar[l]
}
\]
Therefore, $E^2_{p,q} \cong \Tor^R_p(A, \Tor^S_q(B,C))$. Moreover, recall that $\{E^r\}$ converges to the homology of the total complex of \eqref{eq-double-complex}, which is $\Tor^S_\ast (A \otimes_R B, C)$
\end{proof}

It is not too hard to prove Theorem \ref{thm-Betti} using Theorem \ref{thm-SS-base-ring}.

\begin{proof}[Proof of Theorem \ref{thm-Betti}]
$R_{B,r}:=\Q[X_2-X_1,\dots,X_m-X_1]$ is a subring of $R_B:=\Q[X_1,\dots,X_m]$. We have the isomorphism $R_B/\m \cong R_{B,r}/\m_r \otimes_{R_{B,r}} R_B/(X_1).$ Note that $R_B/(X_1)$ is a free module over $R_{B,r}$. So ${(R_B/(X_1))\otimes_{R_B}P}$ is projective over $R_{B,r}$ whenever $P$ is projective over $R_B$. Thus, 
\[
{\Tor^{R_{B,r}}_i(R_{B,r}/\m_r,(R_B/(X_1))\otimes_{R_B}P)} =0
\] 
for all $i\geq 1$. This means that $R=R_{B,r}$, $S=R_B$, $A=R_{B,r}/\m_r$ and $B= R_B/(X_1)$ satisfy the conditions required in Theorem \ref{thm-SS-base-ring}. Now apply Theorem \ref{thm-SS-base-ring} to the $R_B$-module $C=H^{\star,j,k}(B)$. From Remark \ref{remark-quotient}, we know that $H(B) \cong H_r(B)\otimes_\Q \Q[X_1]\{1,0,0\}$. In particular, $H^{\star,j,k}(B)$ is a free $\Q[X_1]$-module, and $H^{\star,j,k}(B)/X_1H^{\star,j,k}(B) \cong H_r^{\star,j,k}(B)\{1\}$, where ``$\{1\}$" means shifting the $R_{B,r}$-module grading up by $1$. Note that $R_B/(X_1)$ has the simple minimal free resolution $0 \rightarrow R_B\{2\} \xrightarrow{X_1} R_B$. So $\Tor^{R_B}_\ast(R_B/(X_1), H^{\star,j,k}(B))$ is isomorphic to the homology of the chain complex $0 \rightarrow H^{\star,j,k}(B)\{2\} \xrightarrow{X_1} H^{\star,j,k}(B)$. But $H^{\star,j,k}(B)$ is a free $\Q[X_1]$-module. So $\Tor^{R_B}_q(R_B/(X_1), H^{\star,j,k}(B))\cong 0$ for all $q\geq1$. This shows that, in our case, the $E^2$-page of the spectral sequence in Theorem \ref{thm-SS-base-ring} is supported on the degree $q=0$. Thus, this spectral sequence collapses at its $E^2$-page. Consequently, as graded $\Q$-spaces,
\begin{eqnarray*}
\Tor^{R_B}_{p}(R_B/\m, H^{\star,j,k}(B)) & \cong & \Tor^{R_B}_{p}(R_{B,r}/\m_r \otimes_{R_{B,r}} R_B/(X_1), H^{\star,j,k}(B)) \\
& \cong & \Tor^{R_{B,r}}_p (R_{B,r}/\m_r, R_B/(X_1) \otimes_{R_B}H^{\star,j,k}(B)) \\
& \cong & \Tor^{R_{B,r}}_p (R_{B,r}/\m_r, H^{\star,j,k}(B)/X_1H^{\star,j,k}(B)) \\
& \cong & \Tor^{R_{B,r}}_p (R_{B,r}/\m_r, H_r^{\star,j,k}(B))\{1\}
\end{eqnarray*}
By the definition of the Betti numbers, this proves that $\beta_B(p,q,j,k) = \beta_{B,r}(p,q-1,j,k)$.

In the case when $B$ is a knot, we have $R_{B,r}=\Q$ and $\m_r=\{0\}$. So $\beta_{B,r}(p,q,j,k)=0$ if $p\geq1$, and $\beta_{B,r}(0,q,j,k) = \dim_\Q H_r^{q,j,k}(B)=\dim_\Q \overline{H}^{q,j,k}(B)$, since $H_r(B)=\overline{H}(B)$ when $B$ is a knot.
\end{proof}

\section{Split Union}\label{sec-split}

To understand the behavior of the Betti numbers under split union, we need the following lemma.

\begin{lemma}\label{lemma-mfs-tensor}
Let $X_1,\dots,X_m,Y_1,\dots,Y_n$ be pairwise distinct variables of degree $2$, $R_X=\Q[X_1,\dots,X_m]$ and $R_Y=\Q[Y_1,\dots,Y_n]$. Assume that $M_X$ is a finitely generated graded $R_X$-module with the minimal free resolution $0 \rightarrow F_l \rightarrow F_{l-1} \rightarrow \cdots \rightarrow F_1 \rightarrow F_0$ over $R_X$, and $M_Y$ is a finitely generated graded $R_Y$-module with the minimal free resolution $0 \rightarrow G_k \rightarrow G_{k-1} \rightarrow \cdots \rightarrow G_1 \rightarrow G_0$ over $R_Y$. Then the finitely generated $R_X\otimes_\Q R_Y$-module $M_X \otimes_\Q M_Y$ has the minimal free resolution $F_\ast \otimes_\Q G_\ast$ over $R_X\otimes_\Q R_Y$, which is of the form
\begin{equation}\label{eq-mfs-tensor}
0 \rightarrow F_l \otimes_\Q G_k \rightarrow \cdots \rightarrow \bigoplus_{p_1+p_2=p} F_{p_1}\otimes_\Q G_{p_2} \rightarrow \cdots \rightarrow F_0 \otimes_\Q G_0.
\end{equation}

Denote by $\beta_{M_X}(p,q)$ the Betti number of $M_X$ over $R_X$, by $\beta_{M_Y}(p,q)$ the Betti number of $M_Y$ over $R_Y$ and by $\beta_{M_X\otimes_\Q M_Y}(p,q)$ the Betti number of $M_X\otimes_\Q M_Y$ over $R_X\otimes_\Q R_Y$. Let
\begin{eqnarray*}
\mathcal{P}_{M_X}(x,y) & = & \sum_{(p,q)\in \zed_{\geq0}\times\zed} \beta_{M_X}(p,q) \cdot x^p \cdot \left(\sum_{i\in \zed} y^{2i+q} \bn{i+m-1}{i}\right), \\
\mathcal{P}_{M_Y}(x,y) & = & \sum_{(p,q)\in \zed_{\geq0}\times\zed} \beta_{M_Y}(p,q) \cdot x^p \cdot \left(\sum_{i\in \zed} y^{2i+q} \bn{i+n-1}{i}\right), \\
\mathcal{P}_{M_X \otimes_\Q M_Y}(x,y) & = & \sum_{(p,q)\in \zed_{\geq0}\times\zed} \beta_{M_{M_X\otimes_\Q M_Y}}(p,q) \cdot x^p \cdot \left(\sum_{i\in \zed} y^{2i+q} \bn{i+m+n-1}{i}\right).
\end{eqnarray*}
Then $\mathcal{P}_{M_X \otimes_\Q M_Y}(x,y) = \mathcal{P}_{M_X}(x,y) \cdot \mathcal{P}_{M_Y}(x,y).$
\end{lemma}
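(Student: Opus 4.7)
The plan is to verify that $F_\ast \otimes_\Q G_\ast$ is a minimal graded free resolution of $M_X \otimes_\Q M_Y$, read off the Betti numbers as a convolution, and then derive the polynomial identity from a generating-function computation.

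First I would check that the tensor product complex is a graded free resolution. Each $F_{p_1} \otimes_\Q G_{p_2}$ is a finitely generated graded free $R_X \otimes_\Q R_Y$-module because $F_{p_1}$ is free over $R_X$, $G_{p_2}$ is free over $R_Y$, and the tensor product of free modules over commutative graded rings is free over the tensor product ring; summing along $p_1+p_2=p$ gives the term in degree $p$ of \eqref{eq-mfs-tensor}. To see exactness in positive degrees, note that $F_\ast$ and $G_\ast$ are complexes of free (hence flat) $\Q$-modules, so the algebraic Künneth formula over $\Q$ gives $H_p(F_\ast \otimes_\Q G_\ast) \cong \bigoplus_{p_1+p_2=p} H_{p_1}(F_\ast) \otimes_\Q H_{p_2}(G_\ast)$, which vanishes for $p>0$ and equals $M_X \otimes_\Q M_Y$ when $p=0$.

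Next I would verify minimality. Let $\m_{X,Y}$ denote the maximal homogeneous ideal $(X_1,\dots,X_m,Y_1,\dots,Y_n)$ of $R_X \otimes_\Q R_Y$. The differential on $F_\ast \otimes_\Q G_\ast$ is $d_F \otimes \id_G + (-1)^\bullet \id_F \otimes d_G$. Choosing homogeneous bases for all $F_{p_1}$ and $G_{p_2}$, the matrix entries of $d_F$ lie in $\m_X \subset \m_{X,Y}$ by minimality of $F_\ast$, and similarly those of $d_G$ lie in $\m_Y \subset \m_{X,Y}$. Hence the differential of $F_\ast \otimes_\Q G_\ast$ has all matrix entries in $\m_{X,Y}$, which is the minimality condition from Definition \ref{def-minimal-res}.

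Having a minimal free resolution in hand, Lemma \ref{lemma-mfs-Betti} lets me read off the Betti numbers of $M_X \otimes_\Q M_Y$. Picking a homogeneous basis of $F_{p_1}$ consisting of $\beta_{M_X}(p_1,q_1)$ elements in degree $q_1$ (and similarly for $G_{p_2}$), a homogeneous basis of $\bigoplus_{p_1+p_2=p} F_{p_1} \otimes_\Q G_{p_2}$ in degree $q$ has cardinality
\[
\beta_{M_X \otimes_\Q M_Y}(p,q) \;=\; \sum_{\substack{p_1+p_2=p\\q_1+q_2=q}} \beta_{M_X}(p_1,q_1)\,\beta_{M_Y}(p_2,q_2).
\]

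Finally I would use the closed-form presentation of $\mathcal{P}$. Definition \ref{def-Betti-polynomial} (and likewise the general form in the present lemma) rewrites the inner sum in $y$ as $\frac{y^q}{(1-y^2)^m}$ or $\frac{y^q}{(1-y^2)^n}$, so
\[
\mathcal{P}_{M_X}(x,y)\cdot\mathcal{P}_{M_Y}(x,y)
= \sum_{p,q} x^p\,\frac{y^q}{(1-y^2)^{m+n}}
\sum_{\substack{p_1+p_2=p\\q_1+q_2=q}} \beta_{M_X}(p_1,q_1)\,\beta_{M_Y}(p_2,q_2).
\]
By the convolution formula above, the inner double sum is exactly $\beta_{M_X \otimes_\Q M_Y}(p,q)$, and $\frac{y^q}{(1-y^2)^{m+n}} = \sum_i y^{2i+q}\binom{i+m+n-1}{i}$ expands back into the defining form of $\mathcal{P}_{M_X \otimes_\Q M_Y}(x,y)$. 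This gives the required multiplicativity. The only step requiring care is the minimality verification, where one must keep track of the maximal ideal of the tensor-product polynomial ring; everything else is bookkeeping via Künneth and generating functions.
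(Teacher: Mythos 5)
Your proposal is correct and follows essentially the same route as the paper: the K\"unneth formula over $\Q$ shows $F_\ast\otimes_\Q G_\ast$ resolves $M_X\otimes_\Q M_Y$, minimality is observed (you spell out the matrix-entry argument the paper leaves as ``clear''), and the polynomial identity is a generating-function bookkeeping step. Your final step via the explicit Betti-number convolution and the closed form $\sum_i y^{2i+q}\bn{i+m-1}{i}=y^q(1-y^2)^{-m}$ is an equivalent variant of the paper's computation, which instead multiplies the graded dimensions $\sum_{p,i}x^p y^i\dim_\Q F_p^i$ of the resolution terms directly.
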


\begin{proof}
It is clear that $F_\ast \otimes_\Q G_\ast$ is a minimal chain complex of graded free $R_X\otimes_\Q R_Y$-modules. So we only need to verify that it is a resolution of $M_X \otimes_\Q M_Y$. Since $\Q$ is a field, the K\"unneth Formula gives that
\[
H_p(F_\ast \otimes_\Q G_\ast) \cong \bigoplus_{p_1+p_2=p} H_{p_1}(F_\ast) \otimes_\Q H_{p_2}(G_\ast) \cong \begin{cases}
M_X \otimes_\Q M_Y & \text{if } p=0,\\
0 & \text{otherwise.}
\end{cases}
\]
It shows that $F_\ast \otimes_\Q G_\ast$ is a minimal free resolution of $M_X \otimes_\Q M_Y$ over $R_X\otimes_\Q R_Y$.

By Lemma \ref{lemma-Poincare-polynomial-general},
\begin{eqnarray*}
\mathcal{P}_{M_X}(x,y) & = & \sum_{(p,i)\in \zed_{\geq0}\times\zed} x^p \cdot y^i \cdot \dim_\Q F_p^i, \\
\mathcal{P}_{M_Y}(x,y) & = & \sum_{(p,i)\in \zed_{\geq0}\times\zed} x^p \cdot y^i \cdot \dim_\Q G_p^i,
\end{eqnarray*}
where $F_p^i$ (resp. $G_p^i$) is the homogeneous component of $F_p$ (resp. $G_p$) of degree $i$. Clearly,
\[
\left(\sum_{(p,i)\in \zed_{\geq0}\times\zed} x^p \cdot y^i \cdot \dim_\Q F_p^i\right) \cdot \left(\sum_{(p,i)\in \zed_{\geq0}\times\zed} x^p \cdot y^i \cdot \dim_\Q G_p^i\right) = \sum_{(p,i)\in \zed_{\geq0}\times\zed}  x^p \cdot y^i \cdot \dim_\Q \left( \bigoplus_{p_1+p_2=p} F_{p_1}\otimes_\Q G_{p_2}\right)^i,
\]
where $\left(\bigoplus_{p_1+p_2=p} F_{p_1}\otimes_\Q G_{p_2}\right)^i$ is the homogeneous component of $\bigoplus_{p_1+p_2=p} F_{p_1}\otimes_\Q G_{p_2}$ of degree $i$. But $M_X\otimes_\Q M_Y$ has the minimal free resolution $F_\ast \otimes_\Q G_\ast$ over $R_X \otimes_\Q R_Y$, which is of form \eqref{eq-mfs-tensor}. So, by Lemma \ref{lemma-Poincare-polynomial-general},
\[
\mathcal{P}_{M_X \otimes_\Q M_Y}(x,y) = \sum_{(p,i)\in \zed_{\geq0}\times\zed}  x^p \cdot y^i \cdot \dim_\Q \left( \bigoplus_{p_1+p_2=p} F_{p_1}\otimes_\Q G_{p_2}\right)^i.
\]
Thus, $\mathcal{P}_{M_X \otimes_\Q M_Y}(x,y) = \mathcal{P}_{M_X}(x,y) \cdot \mathcal{P}_{M_Y}(x,y).$
\end{proof}
 
The next lemma is a simple observation.

\begin{lemma}\label{lemma-pd-m-1}
Let $B$ be a closed braid with $m$ components. Then $\pd_{R_B} H(B) \leq m-1$.
\end{lemma}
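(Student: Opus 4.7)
The plan is to reduce the question to a projective-dimension statement about $H_r(B)$ over the smaller ring $R_{B,r}$, and then invoke Hilbert's Syzygy Theorem. The slack of $1$ between $m$ (the number of variables in $R_B$) and the claimed bound $m-1$ comes precisely from the fact that $R_{B,r}$ has one fewer generator than $R_B$.

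First I would combine Theorem~\ref{thm-Betti} with Lemma~\ref{lemma-pd}. Theorem~\ref{thm-Betti} gives $\beta_B(p,q,j,k)=\beta_{B,r}(p,q-1,j,k)$, so for every $p$ one has $\beta_B(p,\cdot,\cdot,\cdot)$ nonzero if and only if $\beta_{B,r}(p,\cdot,\cdot,\cdot)$ is nonzero. Looking at the definitions of $\mathcal{P}_B$ and $\mathcal{P}_{B,r}$, the factor $a^j b^{(k-j)/2}$ and the rational expressions in $y$ are independent of $p$, so the shift in $q$ has no effect on the $x$-degree. Therefore $\deg_x \mathcal{P}_B(x,y,a,b)=\deg_x \mathcal{P}_{B,r}(x,y,a,b)$. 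Applying both parts of Lemma~\ref{lemma-pd} then yields the identity
\[
\pd_{R_B} H(B)=\pd_{R_{B,r}} H_r(B).
\]

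Next I would note that $R_{B,r}=\Q[X_2-X_1,\dots,X_m-X_1]$ is a graded polynomial ring in exactly $m-1$ variables, and that by Lemma~\ref{lemma-HOMFLYPT-module} the module $H_r(B)$ is finitely generated over it. Hilbert's Syzygy Theorem (Theorem~\ref{thm-syzygy}) therefore bounds $\pd_{R_{B,r}} H_r(B)\le m-1$. Combining this with the identity above gives $\pd_{R_B} H(B)\le m-1$, completing the proof.

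There is essentially no technical obstacle here: the assertion is labelled ``a simple observation'' in the text, and all the nontrivial work has been done in Theorem~\ref{thm-Betti} and Lemma~\ref{lemma-pd}. The only minor point to verify carefully is that the grading-shift ``$q-1$ versus $q$'' in Theorem~\ref{thm-Betti} is orthogonal to the variable $x$ that records the homological degree, so that the $x$-degrees of $\mathcal{P}_B$ and $\mathcal{P}_{B,r}$ really do coincide.
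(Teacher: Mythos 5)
Your proposal is correct and follows essentially the same route as the paper: both use Theorem~\ref{thm-Betti} together with Lemma~\ref{lemma-pd} to identify $\pd_{R_B} H(B)$ with $\pd_{R_{B,r}} H_r(B)$, and then bound the latter by $m-1$ because $R_{B,r}$ is a graded polynomial ring in $m-1$ variables (the paper phrases this as its global dimension being $m-1$, you via Hilbert's Syzygy Theorem, which is the same fact).
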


\begin{proof}
By Lemma \ref{lemma-pd} and Theorem \ref{thm-Betti}, 
\begin{eqnarray*}
\pd_{R_B} H(B) & = & \max\{p~|~ \beta_B(p,q,j,k)\neq 0 \text{ for some } (q,j,k)\in \zed^3\} \\
& = & \max\{p~|~ \beta_{B,r}(p,q,j,k)\neq 0 \text{ for some } (q,j,k)\in \zed^3\} = \pd_{R_{B,r}} H_r(B).
\end{eqnarray*}
But $R_{B,r}$ is a polynomial ring of $m-1$ variables. So its global dimension is $m-1$. Thus, $\pd_{R_B} H(B) = \pd_{R_{B,r}} H_r(B) \leq m-1$.
\end{proof}

Now we are ready to prove Theorem \ref{thm-split}.

\begin{proof}[Proof of Theorem \ref{thm-split}]
We prove Part (1) first. Using the polynomial notation in Lemma \ref{lemma-mfs-tensor}, we have
\begin{eqnarray*}
\mathcal{P}_{B_1} (x,y,a,b) & = & \sum_{(j,k)\in \zed^2} a^j \cdot b^{\frac{k-j}{2}} \cdot \mathcal{P}_{H^{\star,j,k}(B_1)}(x,y), \\
\mathcal{P}_{B_2} (x,y,a,b) & = & \sum_{(j,k)\in \zed^2} a^j \cdot b^{\frac{k-j}{2}} \cdot \mathcal{P}_{H^{\star,j,k}(B_2)}(x,y), \\
\mathcal{P}_{B_1\sqcup B_2} (x,y,a,b) & = & \sum_{(j,k)\in \zed^2} a^j \cdot b^{\frac{k-j}{2}} \cdot \mathcal{P}_{H^{\star,j,k}(B_1\sqcup B_2)}(x,y).
\end{eqnarray*}
By isomorphism \eqref{eq-H-tensor},
\[
H^{\star,j,k}(B_1\sqcup B_2) \cong \bigoplus_{j_1+j_2=j-1,~k_1+k_2 =k+1} H^{\star,j_1,k_1}(B_1) \otimes_\Q H^{\star,j_2,k_2}(B_2).
\]
Thus, by Lemma \ref{lemma-mfs-tensor},
\[
\mathcal{P}_{H^{\star,j,k}(B_1\sqcup B_2)}(x,y) = \sum_{j_1+j_2=j-1,~k_1+k_2 =k+1} \mathcal{P}_{H^{\star,j_1,k_1}(B_1)}(x,y) \cdot \mathcal{P}_{H^{\star,j_2,k_2}(B_2)}(x,y).
\]
Therefore,
\begin{eqnarray*} 
&& \mathcal{P}_{B_1\sqcup B_2} (x,y,a,b) \\
& = & \sum_{(j,k)\in \zed^2} a^j \cdot b^{\frac{k-j}{2}} \cdot \mathcal{P}_{H^{\star,j,k}(B_1\sqcup B_2)}(x,y) \\
& = & ab^{-1}\sum_{(j,k)\in \zed^2} a^{j-1} \cdot b^{\frac{(k+1)-(j-1)}{2}} \cdot \left(\sum_{j_1+j_2=j-1,~k_1+k_2 =k+1} \mathcal{P}_{H^{\star,j_1,k_1}(B_1)}(x,y) \cdot \mathcal{P}_{H^{\star,j_2,k_2}(B_2)}(x,y)\right) \\
& = &  ab^{-1}\left(\sum_{(j_1,k_1)\in \zed^2} a^{j_1} \cdot b^{\frac{k_1-j_1}{2}} \cdot \mathcal{P}_{H^{\star,j_1,k_1}(B_1)}(x,y)\right) \cdot \left(\sum_{(j_2,k_2)\in \zed^2} a^{j_2} \cdot b^{\frac{k_2-j_2}{2}} \cdot \mathcal{P}_{H^{\star,j_2,k_2}(B_1)}(x,y)\right) \\
& = & ab^{-1}\mathcal{P}_{B_1} (x,y,a,b) \cdot \mathcal{P}_{B_2} (x,y,a,b).
\end{eqnarray*}
Combining this and Lemma \ref{lemma-pd}, we get
\begin{eqnarray*}
\pd_{R_{B_1 \sqcup B_2}} H(B_1 \sqcup B_2) & = & \deg_x \mathcal{P}_{B_1\sqcup B_2} (x,y,a,b) \\
& = & \deg_x \mathcal{P}_{B_1} (x,y,a,b) + \deg_x \mathcal{P}_{B_2} (x,y,a,b) = \pd_{R_{B_1}} H(B_1)+\pd_{R_{B_2}} H(B_2).
\end{eqnarray*}
This completes the proof of Part (1).

For Part (2), without loss of generality, assume that $B$ is an $m$-component closed braid that is $n$-split. Then there are $n$ closed braid $B_1,\dots,B_n$ such that $B = B_1 \sqcup \cdots \sqcup B_n$. Denote by $m_l$ the number of components of $B_l$. Note that $\sum_{l=1}^n m_l =m$. By Part (1) and Lemma \ref{lemma-pd-m-1}, we have 
\[
\pd_{R_B} H(B) = \sum_{l=1}^n \pd_{R_{B_l}} H(B_l) \leq \sum_{l=1}^n (m_l-1) =m-n.
\]
This completes the proof of Theorem \ref{thm-split}.
\end{proof}

\section{The Hopf Link}\label{sec-hopf}

In this section, we compute the Betti numbers of the middle HOMFLYPT homology of the positive Hopf link $B_1$ and verify Example \ref{eg-hopf}.

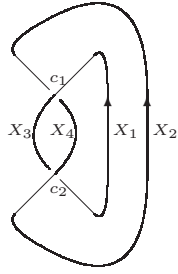
\begin{figure}[ht]
\[
\setlength{\unitlength}{1pt}
\begin{picture}(75,125)(-20,-32.5)

\put(-15,0){\line(1,1){15}}

\qbezier(0,15)(15,30)(2,43)

\put(-2,47){\line(-1,1){13}}

\put(15,0){\line(-1,1){13}}

\put(0,45){\line(1,1){15}}

\qbezier(-2,17)(-15,30)(0,45)

\qbezier(15,60)(20,65)(20,45)

\qbezier(15,0)(20,-5)(20,15)

\put(20,15){\vector(0,1){30}}

\qbezier(-15,60)(-20,65)(0,75)

\qbezier(0,75)(35,92.5)(35,45)

\qbezier(-15,0)(-20,-5)(0,-15)

\qbezier(0,-15)(35,-32.5)(35,15)

\put(35,15){\vector(0,1){30}}

\put(-18,30){\tiny{$X_3$}}

\put(-2,30){\tiny{$X_4$}}

\put(22,30){\tiny{$X_1$}}

\put(37,30){\tiny{$X_2$}}

\put(-2,50){\tiny{$c_1$}}

\put(-2,8){\tiny{$c_2$}}

\end{picture} 
\]
\caption{The Hopf link $B_1$}\label{fig-hopf}

\end{figure}

Figure \ref{fig-hopf} is a standard diagram of $B_1$. The variables assigned to the edges of this diagram are $X_1,X_2,X_3,X_4$ as shown in Figure \ref{fig-hopf}. The base ring $R=R(B_1)$ of the double chain complex $C_0(B_1)$ is $R=R(B_1)=\Q[X_1,X_2,X_3,X_4]/(X_1+X_2-X_3 -X_4) \cong \Q[X_1,X_2,X_3]$. The double chain complexes associated to the two crossings $c_1$ and $c_2$ of $B_1$ are
\[
C_0(c_1) = \xymatrix{
R\{0,-2,0\} \ar[rrr]^{X_2-X_3} &&& R\{0,0,0\} \\
R\{2,-2,-2\} \ar[rrr]^{(X_2-X_3)(X_1-X_3)} \ar[u]^{X_1-X_3} &&& R\{0,0,-2\} \ar[u]^{1}
}
\]
and 
\[
C_0(c_2) = \xymatrix{
R\{0,-2,0\} \ar[rrr]^{X_3-X_2} &&& R\{0,0,0\} \\
R\{2,-2,-2\} \ar[rrr]^{-(X_2-X_3)(X_1-X_3)} \ar[u]^{X_1-X_3} &&& R\{0,0,-2\}.\ar[u]^{1}
}
\]
So the double chain complex $C_0(B_1)$ is 
\[
C_0(B_1) = \xymatrix{
R\{0,-4,0\} \ar[rr]^{d_+^{-4,0}} &&
{\left.%
\begin{array}{l}
  R\{0,-2,0\}\\
  \oplus \\
  R\{0,-2,0\}
\end{array}%
\right.} \ar[rr]^{d_+^{-2,0}} && R\{0,0,0\} \\
{\left.%
\begin{array}{l}
  R\{2,-4,-2\}\\
  \oplus \\
  R\{2,-4,-2\}
\end{array}%
\right.} \ar[rr]^{d_+^{-4,-2}} \ar[u]^{d_v^{-4,-2}} &&
{\left.%
\begin{array}{l}
  R\{0,-2,-2\} \\
  \oplus \\
  R\{2,-2,-2\}\\
  \oplus \\
  R\{2,-2,-2\} \\
  \oplus \\
  R\{0,-2,-2\}
\end{array}%
\right.} \ar[rr]^{d_+^{-2,-2}} \ar[u]^>>>>{d_v^{-2,-2}} &&
{\left.%
\begin{array}{l}
  R\{0,0,-2\}\\
  \oplus \\
  R\{0,0,-2\}
\end{array}%
\right.} \ar[u]^{d_v^{0,-2}} \\
R\{4,-4,-4\} \ar[rr]^{d_+^{-4,-4}} \ar[u]^{d_v^{-4,-4}}
&& 
{\left.%
\begin{array}{l}
  R\{2,-2,-4\}\\
  \oplus \\
  R\{2,-2,-4\}
\end{array}%
\right.} \ar[rr]^{d_+^{-2,-4}} \ar[u]^>>>>{d_v^{-2,-4}} &&
R\{0,0,-4\}, \ar[u]^{d_v^{0,-4}}
}
\]
where the horizontal chain maps are 
\begin{eqnarray*}
d_+^{-2,0} & = & (X_2-X_3,X_3-X_2), \\
d_+^{-4,0} & = & {\left(%
\begin{array}{c}
  X_2-X_3\\
  X_2-X_3
\end{array}%
\right)}, \\
d_+^{-2,-2} & = & {\left(%
\begin{array}{cccc}
 X_2-X_3 & -(X_2-X_3)(X_1-X_3) & 0 & 0 \\
  0& 0& (X_2-X_3)(X_1-X_3) & X_3-X_2
\end{array}%
\right)}, \\
d_+^{-4,-2} & = & {\left(%
\begin{array}{cc}
  (X_2-X_3)(X_1-X_3) & 0 \\
  X_2-X_3 & 0\\
  0 & X_2-X_3\\
  0 & (X_2-X_3)(X_1-X_3)
\end{array}%
\right)}, \\
d_+^{-2,-4} & = & ((X_2-X_3)(X_1-X_3), -(X_2-X_3)(X_1-X_3)), \\
d_+^{-4,-4} & = & {\left(%
\begin{array}{c}
  (X_2-X_3)(X_1-X_3) \\
  (X_2-X_3)(X_1-X_3)
\end{array}%
\right)},
\end{eqnarray*}
and the vertical chain maps are
\begin{eqnarray*}
d_v^{0,-2} & = & (1,1) \\
d_v^{-2,-2}& = & {\left(%
\begin{array}{cccc}
 1 & 0 & X_1-X_3 & 0 \\
 0 & X_1-X_3 & 0 & 1
\end{array}%
\right)}, \\
d_v^{-4,-2} & = & (X_1-X_3,X_1-X_3), \\
d_v^{0,-4} & = & {\left(%
\begin{array}{c}
  1\\
  -1
\end{array}%
\right)}, \\
d_v^{-2,-4} & = & {\left(%
\begin{array}{cc}
  X_1-X_3 & 0 \\
  0 & 1 \\
  -1 & 0 \\
  0 & X_3-X_1
\end{array}%
\right)}, \\
d_v^{-4,-4} & = & {\left(%
\begin{array}{c}
  X_1-X_3\\
  X_3-X_1
\end{array}%
\right)}.
\end{eqnarray*}

Thus, the homology of $C_0(B_1)$ with respect to $d_+$ is
\begin{eqnarray*}
&& H(C_0(B_1),d_+) \cong\\
&& \xymatrix{
0  &
{\left(%
\begin{array}{c}
  1\\
  1 
\end{array}%
\right)} \cdot R/(X_2-X_3)  & R/(X_2-X_3) \\
0 \ar[u]^{d_v^{-4,-2}} &
{\left(%
\begin{array}{c}
  0 \\
  0 \\
  1\\
  X_1-X_3
\end{array}%
\right)\cdot R/(X_2-X_3)} \oplus  {\left(%
\begin{array}{c}
  X_1-X_3 \\
  1 \\
  0\\
  0
\end{array}%
\right)\cdot R/(X_2-X_3)} \ar[u]^>>>>{d_v^{-2,-2}} &
{\left.%
\begin{array}{l}
  R/(X_2-X_3)\\
  \oplus \\
  R/(X_2-X_3)
\end{array}%
\right.} \ar[u]^{d_v^{0,-2}} \\
0  \ar[u]^{d_v^{-4,-4}}
& 
{\left(%
\begin{array}{c}
  1\\
  1
\end{array}%
\right)} \cdot R/((X_2-X_3)(X_1-X_3)) \{2\} \ar[u]^>>>>{d_v^{-2,-4}} &
R/((X_2-X_3)(X_1-X_3)), \ar[u]^{d_v^{0,-4}}
}
\end{eqnarray*}
where we omit the second and third $\zed$-gradings of the homology since these are represented by the position of the term in the diagram. Also, note that the first $\zed$-grading of the middle term of the bottom row is shifted up by $2$.

Now we take homology with respect to $d_v$. This gives 
\[
H(H(C_0(B_1),d_+),d_v) \cong \xymatrix{
0 & \textcolor{red}{R/(X_2-X_3,X_1-X_3)} & 0 \\
0 & 0 & 0 \\
0 & R/(X_1-X_3)\{2\} & R/(X_1-X_3).
}
\]
Note that $R/(X_1-X_3) \cong \Q[X_1,X_2]=R_{B_1}$. So
\[
H^{\star,j,k}(H(C_0(B_1),d_+),d_v) \cong \begin{cases}
\textcolor{red}{R_{B_1}/(X_1-X_2)} & \text{if } (j,k)=(-2,0), \\
R_{B_1} & \text{if } (j,k)=(0,-4), \\
R_{B_1}\{2\} & \text{if } (j,k)=(-2,-4), \\
0 & \text{otherwise.}
\end{cases}
\]

Note that the writhe of $B_1$ is $w=2$ and $B_1$ has $2$ strands. So, by equation \eqref{eq-H-def},
\[
H^{\star,j,k}(B_1) = H^{\star,j-3,k-1}(H(C_0(B_1),d_+),d_v)  \cong \begin{cases}
\textcolor{red}{R_{B_1}/(X_1-X_2)} & \text{if } (j,k)=(1,1), \\
R_{B_1} & \text{if } (j,k)=(3,-3), \\
R_{B_1}\{2\} & \text{if } (j,k)=(-1,-3), \\
0 & \text{otherwise.}
\end{cases}
\]
Using Lemma \ref{lemma-mfs-Betti}, one gets
\[
\beta_{B_1}(p,q,j,k) = \begin{cases}
1 & \text{if } (p,q,j,k)=\textcolor{red}{(1,2,1,1)}, (0,0,1,1),(0,0,3,-3),(0,2,-1,-3), \\
0 & \text{otherwise.}
\end{cases}
\]
In particular, the projective dimension of $H(B_1)$ over $R_{B_1}$ is $1$ by Lemma \ref{lemma-pd}. Therefore, by Theorem \ref{thm-split}, the positive Hopf link $B_1$ does not split.

\section{Projective Dimension of the $\mathfrak{sl}(N)$ Homology}\label{sec-sl-N}

The proof of Lemma \ref{lemma-sl-N-proj-dim} is quite straightforward. But, to state it, we need to recall the relation between the projective dimension and regular sequences via the depth. First, we recall the well-known Auslander-Buchsbaum Formula, which can be found in for example \cite[Section 15]{Peeva-graded-syzygies}.

\begin{theorem}[Auslander-Buchsbaum Formula]\label{thm-Auslander-Buchsbaum}
Let $R=\Q[X_1,\dots,X_m]$, graded so that which $X_j$ is homogeneous of degree $2$. Assume that $M$ is a finitely generated graded $R$-module. Then $\pd_R M = m-\mathrm{depth}(M)$, where $\mathrm{depth}(M)$ is the depth of $M$ over $R$ with respect to the maximal homogeneous ideal $\mathfrak{m}=(X_1,\dots,X_m)$ of $R$.
\end{theorem}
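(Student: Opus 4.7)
The plan is to identify both $\pd_R M$ and $\mathrm{depth}(M)$ with vanishing loci built from the Koszul complex on $X_1,\dots,X_m$, and to convert one into the other via Koszul self-duality. From Lemma \ref{lemma-mfs-Betti} we already have
\[
\pd_R M = \max\{p\geq 0 \mid \Tor^R_p(\Q, M) \neq 0\},
\]
where $\Q\cong R/\m$ is the residue field. The first task is to establish the dual Ext-characterization
\[
\mathrm{depth}(M) = \min\{p\geq 0 \mid \mathrm{Ext}^p_R(\Q, M) \neq 0\}.
\]
This is the graded form of Rees's theorem: starting from the observation that an $M$-regular homogeneous element $x\in\m$ makes $\Hom_R(\Q, M)=0$, one inducts on the length of a maximal homogeneous $M$-regular sequence in $\m$ using the long exact Ext-sequence coming from $0\to M\xrightarrow{x} M\to M/xM\to 0$. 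The independence of this length from the choice of regular sequence is the usual homogeneous prime-avoidance argument applied to the associated primes of the successive quotients.

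The key technical tool is the Koszul complex $K_\bullet:=K_\bullet(X_1,\dots,X_m)$, with $K_p=\bigwedge^p R^m$ and the standard Koszul differential. Since $X_1,\dots,X_m$ is an $R$-regular sequence, $K_\bullet$ is a minimal graded free resolution of $\Q$ over $R$, so
\[
\Tor^R_p(\Q,M) = H_p(K_\bullet\otimes_R M),\qquad \mathrm{Ext}^p_R(\Q,M) = H^p(\Hom_R(K_\bullet, M)).
\]
The wedge pairing $\bigwedge^p R^m\otimes_R \bigwedge^{m-p}R^m\to \bigwedge^m R^m\cong R$ is perfect and (up to signs) intertwines the Koszul differential with its transpose, giving an isomorphism of chain complexes $\Hom_R(K_\bullet, R)\cong K_{m-\bullet}$ up to a uniform grading shift. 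Since each $K_p$ is a finitely generated free $R$-module, $\Hom_R(K_\bullet, M)\cong \Hom_R(K_\bullet, R)\otimes_R M\cong K_{m-\bullet}\otimes_R M$, and passing to (co)homology yields
\[
\mathrm{Ext}^p_R(\Q,M) \cong \Tor^R_{m-p}(\Q,M)
\]
as graded $\Q$-spaces, up to an overall shift that does not affect vanishing loci.

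Assembling the three identifications,
\[
\mathrm{depth}(M) = \min\{p \mid \mathrm{Ext}^p_R(\Q,M)\neq 0\} = \min\{p \mid \Tor^R_{m-p}(\Q,M)\neq 0\} = m-\pd_R M,
\]
which is the desired formula. The main obstacle I expect is the Rees-theorem step: verifying the Ext-characterization of depth in the graded setting requires showing that any two maximal homogeneous $M$-regular sequences in $\m$ have the same length, and this needs the graded prime-avoidance lemma applied to $\mathrm{Ass}(M)$, $\mathrm{Ass}(M/x_1 M)$, and so on. Once that characterization is in hand, the Koszul self-duality and the final comparison are essentially formal, the only subtlety being the uniform grading shift, which is irrelevant for the vanishing statements that drive the argument.
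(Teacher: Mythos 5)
Your argument is correct and essentially complete, but note that the paper offers no proof of this theorem to compare against: it is stated as a known result with a pointer to \cite[Section 15]{Peeva-graded-syzygies}, so any argument you supply is necessarily independent of the text. Your route is one of the standard proofs in the graded/local setting, and its three ingredients are all sound: the Tor-characterization of $\pd_R M$ is exactly Lemma \ref{lemma-mfs-Betti}; the Ext-characterization of depth is the graded Rees theorem, whose induction on a maximal homogeneous regular sequence works as you describe (in the base case you should say explicitly that $\mathrm{depth}(M)=0$ forces $\m\in\mathrm{Ass}(M)$ and hence $\Hom_R(\Q,M)\neq 0$ --- this is where graded prime avoidance and the homogeneity of the associated primes of a graded module enter, and both are available over the infinite field $\Q$); and the Koszul self-duality $\Hom_R(K_\bullet,R)\cong K_{m-\bullet}$, combined with $\Hom_R(K_\bullet,M)\cong\Hom_R(K_\bullet,R)\otimes_R M$ for finitely generated free modules, gives $\mathrm{Ext}^p_R(\Q,M)\cong\Tor^R_{m-p}(\Q,M)$ up to a uniform internal degree shift (by $-2m$ with the paper's grading conventions), which indeed is irrelevant to the vanishing loci. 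The usual textbook proof (as in Peeva or Eisenbud) instead inducts on $\mathrm{depth}(M)$, comparing the minimal free resolutions of $M$ and of $M/xM$ for a homogeneous $M$-regular element $x$; your version trades that induction for the duality computation and concentrates all the work in the Rees step, exactly as you flag. The only caveat worth recording is the degenerate case $M=0$, which both sides of the formula exclude by convention.
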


Next we recall the definition of regular sequences in \cite[Section 14]{Peeva-graded-syzygies}.

\begin{definition}\label{def-regular}
Let $R=\Q[X_1,\dots,X_m]$, graded so that which $X_j$ is homogeneous of degree $2$. Assume that $M$ is a finitely generated graded $R$-module. 

An element $f \in R$ is a non-zero divisor on $M$ if $fu\neq 0$ for every non-zero element $u$ of $M$. Otherwise, $f$ is called a zero divisor on $M$.

A sequence $f_1,\dots,f_q\in R$ is an $M$-regular sequence if 
\begin{itemize}
	\item $(f_1,\dots,f_q)M \neq M$,
	\item for every $1\leq i \leq q$, $f_i$ is a non-zero divisor on the module $M/(f_1,\dots,f_{i-1})M$.
\end{itemize}
\end{definition}

The following relation between the depth and regular sequences is stated in \cite[Proposition 20.1]{Peeva-graded-syzygies} and proved in \cite[Propositions 1.5.11 and 1.5.12]{Bruns-Herzog}.

\begin{proposition}\label{prop-depth-regular-sequences}
Let $R=\Q[X_1,\dots,X_m]$, graded so that which $X_j$ is homogeneous of degree $2$. Assume that $M$ is a finitely generated graded $R$-module. If the depth of $M$ over $R$ with respect to its maximal homogeneous ideal is $s$, then there exists an $M$-regular sequence $f_1,f_2,\dots,f_s$ of homogeneous elements of $R$.
\end{proposition}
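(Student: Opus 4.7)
The plan is to prove the statement by induction on $s$, producing one homogeneous non-zero divisor at a time and passing to a quotient module whose depth drops by exactly one. The base case $s=0$ is vacuous. For the inductive step assume $s \geq 1$ and $M$ has depth $s$ with respect to $\m$; the goal is to find a homogeneous element $f_1 \in \m$ that is a non-zero divisor on $M$, then apply induction to $M/f_1M$.

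To produce $f_1$, I would use the graded theory of associated primes. Since $R=\Q[X_1,\dots,X_m]$ is Noetherian and $M$ is finitely generated and graded, the set of associated primes $\mathrm{Ass}(M)$ is finite and consists of homogeneous primes (each one being the annihilator of a nonzero homogeneous element of $M$). The set of zero-divisors on $M$ equals the union $\bigcup_{\mathfrak{p}\in \mathrm{Ass}(M)} \mathfrak{p}$. If every homogeneous element of $\m$ were a zero-divisor on $M$, then the homogeneous version of prime avoidance, applied to the finite family of homogeneous primes $\mathrm{Ass}(M)$, would force $\m \subseteq \mathfrak{p}$ for some $\mathfrak{p} \in \mathrm{Ass}(M)$, hence $\mathfrak{p}=\m$ by maximality. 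That would give $\Hom_R(R/\m, M)\neq 0$ and therefore $\mathrm{depth}(M)=0$, contradicting $s\geq 1$. So a homogeneous non-zero divisor $f_1 \in \m$ exists. To close the induction, I would then invoke the standard depth-drop lemma: from the short exact sequence $0 \to M \xrightarrow{f_1} M \to M/f_1M \to 0$ (with an appropriate grading shift) and the long exact sequence in $\mathrm{Ext}^i_R(R/\m,-)$, using that $f_1 \in \m$ acts as zero on $\mathrm{Ext}^i_R(R/\m, M)$, one reads off $\mathrm{depth}(M/f_1M) = s-1$. The induction hypothesis then provides a homogeneous $(M/f_1M)$-regular sequence $f_2,\dots,f_s$, and concatenation yields the required homogeneous $M$-regular sequence.

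The main obstacle is the graded prime avoidance step in the first paragraph above: one must argue that a homogeneous ideal generated in positive degree cannot lie inside a finite union of homogeneous proper primes without lying inside one of them. This is where one genuinely uses that $\Q$ is infinite, either by invoking the usual prime avoidance lemma directly or by a direct argument on positive-degree pieces (a finite union of proper subspaces over an infinite field does not cover the whole space). Everything else—the finiteness and homogeneity of $\mathrm{Ass}(M)$, the identification of zero-divisors with the union of associated primes, and the depth-drop statement—is standard graded commutative algebra and can simply be cited, e.g. from \cite{Bruns-Herzog} or \cite{Peeva-graded-syzygies}, rather than proved from scratch.
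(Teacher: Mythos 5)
Your argument is correct, and it is essentially the standard proof of this fact; the paper itself does not prove the proposition but simply cites it (\cite[Proposition 20.1]{Peeva-graded-syzygies}, proved in \cite[Propositions 1.5.11 and 1.5.12]{Bruns-Herzog}), and your induction on $s$ --- graded associated primes, homogeneous prime avoidance to extract a homogeneous non-zero divisor $f_1\in\m$, and the depth-drop lemma applied to $0\to M\xrightarrow{f_1}M\to M/f_1M\to 0$ --- is exactly the argument those references give. Two small points: the depth-drop step presupposes that depth is defined via $\mathrm{Ext}^i_R(R/\m,-)$ (or the Koszul complex), which the paper leaves implicit but is the intended convention; and the graded prime avoidance you flag as the main obstacle does hold here, either by your infinite-field subspace argument or by the power trick valid over any coefficient field, since the associated primes of a finitely generated graded module are homogeneous. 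With either justification supplied, the proof is complete.
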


Now we are ready to prove Lemma \ref{lemma-sl-N-proj-dim}.

\begin{proof}[Proof of Lemma \ref{lemma-sl-N-proj-dim}]
Recall that $H_N(B)$ is a graded $R_B$-module that is finite dimensional over $\Q$. This implies that $M$ is finitely generated over $R_B$. Moreover, this also implies that any homogeneous element of $R_B$ of positive degree is a zero divisor on $H_N(B)$. Thus, there are no $H_N(B)$-regular sequences of homogeneous elements of $R_B$ of any positive length. By Proposition \ref{prop-depth-regular-sequences}, this implies that $\mathrm{depth}(H_N(B))=0$, where the depth is  over $R_B$ and with respect to its maximal homogeneous ideal. Now the Auslander-Buchsbaum Formula gives that $\pd_{R_B} H_N(B) = m-\mathrm{depth}(H_N(B))=m$.
\end{proof}


\begin{thebibliography}{99}
\bibliographystyle{plain}  
  \bibitem{Arrondo-notes}
   V. Arrondo
   \emph{Introduction to projective varieties,}
   http://www.mat.ucm.es/$\sim$arrondo/projvar.pdf
  \bibitem{Batson-Seed}
   J. Batson, C. Seed,
   \textit{A link-splitting spectral sequence in Khovanov homology,}
    Duke Math. J. \textbf{164} (2015), no. 5, 801--841. 
  \bibitem{Bruns-Herzog}
   W. Bruns, J. Herzog, 
   \textit{Cohen-Macaulay rings,}
    Cambridge Studies in Advanced Mathematics, 39. Cambridge University Press, Cambridge, 1993. xii+403 pp. ISBN: 0-521-41068-1
  \bibitem{Eisenbud-book-1}
   D. Eisenbud,
   \textit{Commutative algebra. With a view toward algebraic geometry,}
   Graduate Texts in Mathematics, 150. Springer-Verlag, New York, 1995. xvi+785 pp. ISBN: 0-387-94268-8; 0-387-94269-6
  \bibitem{Eisenbud-book-2}
   D. Eisenbud,
   \textit{The geometry of syzygies. A second course in commutative algebra and algebraic geometry.}
   Graduate Texts in Mathematics, 229. Springer-Verlag, New York, 2005. xvi+243 pp. ISBN: 0-387-22215-4	
	\bibitem{KR1}
   M. Khovanov, L. Rozansky,
   \emph{Matrix factorizations and link homology,}
   Fund. Math. \textbf{199} (2008), no. 1, 1--91.
  \bibitem{KR2}
   M. Khovanov, L. Rozansky,
   \emph{Matrix factorizations and link homology II,}
   Geom. Topol. \textbf{12} (2008), no. 3, 1387--1425.
 	\bibitem{Peeva-graded-syzygies}
   I. Peeva,
   \textit{Graded syzygies,}
    Algebra and Applications, 14. Springer-Verlag London, Ltd., London, 2011. xii+302 pp. ISBN: 978-0-85729-176-9.
	\bibitem{Ras-2-bridge}
   J. Rasmussen,
   \textit{Khovanov-Rozansky homology of two-bridge knots and links,}
   Duke Math. Journal \textbf{136} (2007), 551--583.
  \bibitem{Ras2}
   J. Rasmussen,
   \textit{Some differentials on Khovanov-Rozansky homology,}
   Geom. Topol. \textbf{19} (2015) 3031--3104, DOI: 10.2140/gt.2015.19.3031
  \bibitem{Rotman-book}
   J. Rotman,
   \textit{An introduction to homological algebra,}
   second edition, Universitext, Springer, New York, 2009. xiv+709 pp. ISBN: 978-0-387-24527-0
  \bibitem{Wu-hilbert}
   H. Wu,
   \textit{On the Hilbert polynomial of the HOMFLYPT homology,}
    14pp,	arXiv:1604.05222.
\end{thebibliography}
\end{document}